\newif\ifcolorcomments
\newtheorem*{corollary*}{Corollary}
\newtheorem{theorem}{Theorem}[section]
\newtheorem{lemma}[theorem]{Lemma}
\newtheorem{proposition}[theorem]{Proposition}
\theoremstyle{definition}
\newtheorem{definition}[theorem]{Definition}
\newtheorem{remark}[theorem]{Remark}
\newtheorem{conjecture}[theorem]{Conjecture}
\newcommand{\N}{\mathbb N}
\newcommand{\Q}{\mathbb Q}
\newcommand{\R}{\mathbb R}
\newcommand{\cH}{\mathcal{H}}
\newcommand{\cK}{\mathcal{K}}
\newcommand{\cN}{{\mathcal N}}
\DeclareMathOperator{\dimh}{\dim_H}
\renewcommand{\text}{\textup}
\newcommand{\NPC}[1]{\ignorespaces}
\newif\ifdraft\drafttrue
\def\N{\mathbb N}
\def\Q{\mathbb Q}
\def\R{\mathbb R}
\begin{document}


\title{Approximation by Uniformly distributed sequences}

\author[G. Gonz\'alez Robert]{Gerardo Gonz\'alez Robert}
\address{G. Gonz\'alez Robert,  Department of Mathematical and Physical Sciences,  La Trobe University, Bendigo 3552, Australia. }
\email{G.Robert@latrobe.edu.au}

\author[M. Hussain]{Mumtaz Hussain}
\address{Mumtaz Hussain,  Department of Mathematical and Physical Sciences,  La Trobe University, Bendigo 3552, Australia. }
\email{m.hussain@latrobe.edu.au}

\author[N. Shulga]{Nikita Shulga}
\address{Nikita Shulga,  Department of Mathematical and Physical Sciences,  La Trobe University, Bendigo 3552, Australia. }
\email{n.shulga@latrobe.edu.au}

\author[B. Ward]{ Benjamin Ward}
\address{Benjamin Ward,  {Department of Mathematics, University of York, Heslington, YO10 5DD.} }
\email{benjamin.ward@york.ac.uk, ward.ben1994@gmail.com }

\frenchspacing
\maketitle
\begin{abstract}
We consider approximation properties of real points by uniformly distributed sequences. Under some assumptions on the approximation functions, we prove a Khintchine-type $0$-$1$ dichotomy law. We establish a new connection between uniform distribution and the ubiquity property. Namely, we show that a bound on the discrepancy of the sequence implies the ubiquity property, which helps to obtain divergence results. We further obtain Hausdorff dimension results for weighted sets. The key tools in proving these results are the weighted ubiquitous systems and weighted mass transference principle introduced recently by Kleinbock \& Wang, and  Wang \& Wu respectively.

\end{abstract}

\section{Introduction and main results}

Fix a natural number $n$ and let $\omega=(\boldsymbol{\omega}_{j})_{j\geq 1}$ be a sequence of points in $[0,1]^{n}$ (that is, $\boldsymbol{\omega}_{j}=(\omega_{j,1}, \dots , \omega_{j,n})\in [0,1]^{n}$ for each $j\in\N$). Let $\psi_{i}:\N\to \R_{+}$ be a monotonic decreasing function for each $1\leq i \leq n$ and set $\Psi=(\psi_1,\ldots, \psi_n)$. We wish to analyse the question of the approximation of a point $\mathbf{x} \in [0,1]^{n}$ by elements of our chosen fixed sequence $\omega$ from a metrical point of view. For this purpose, we consider the set 
\begin{equation*}
    W_{\omega}(\Psi):=\left\{\mathbf{x}=(x_1,\ldots,x_n) \in [0,1]^{n}: |x_{i}-\omega_{j,i}|<\psi_{i}(j) \, \quad (1\leq i \leq n) \quad \text{ for i.m. } j\in \N\right\}\, .
\end{equation*}
Basically, this is a set of points in an $n$-dimensional unit cube, which are coordinate-wise  $\psi_i$-approximable by a fixed sequence $\omega$. One can easily present degenerate examples of sequences $\omega$ for which the set $W_{\omega}(\Psi)$ has Lebesgue measure $0$. But what are the conditions on $\omega$ under which we can guarantee this happens? And under which conditions on $\omega$ can we guarantee that $W_{\omega}(\Psi)$ has full Lebesgue measure? 

In this article, we present an approach to tackle such questions and provide some partial results. Surprisingly, the framework of ubiquitous systems can be connected to the setting of approximation with sequences $\omega$ by imposing some natural assumptions on the distribution properties of the sequences $\omega$ with standard definitions of uniform distribution and discrepancy.

We recall a few basic definitions from uniform distribution theory, see for example \cite{KuipersNiederreiter1974}. By a \textit{rectangle} $R$ in $[0,1]^n$ we mean a set of the form
\begin{equation*}
    R=\left\{ \mathbf{x} \in [0,1]^{n}: a_{i}\leq x_{i}<b_{i} \quad (1\leq i \leq n) \, \right\}
\end{equation*}
for some $(a_{1},\dots, a_{n}), (b_{1},\dots, b_{n}) \in [0,1]^{n}$ with $a_{i}<b_{i}$ for each $i=1, \ldots, n$. For any rectangle $R$ and any $N\in\N$, define
\begin{equation*}
    A(R;N,\omega): =\#\{1\leq j\leq N : \boldsymbol{\omega}_{j}\in R\} \,.
\end{equation*}
Denote the Lebesgue measure on $[0,1]^n$ by $\lambda_n$. For $n=1$, we write $\lambda = \lambda_1$.
\begin{definition}[Uniformly distributed sequence]
A sequence $\omega= (\boldsymbol{\omega}_{j})_{j \geq 1 }$ of points in $[0,1]^n$ is a \textit{uniformly distributed sequence on} $[0,1]^{n}$, denoted $\omega$ is a $u.d.s$ on $[0,1]^{n}$, if for any $n$-dimensional rectangle $R\subseteq [0,1]^{n}$ we have
\begin{equation*}
    \lim_{N\to \infty} \frac{ A(R;N,\omega)}{N} = \lambda_n(R)\, .
\end{equation*}
\end{definition}
That is, a sequence $\omega$ is uniformly distributed on $[0,1]^{n}$ if for any rectangle $R$ the sequence spends time in $R$ proportionate to the volume of $R$. 
Much is known about $W_{\omega}(\Psi)$ when $\omega$ is a sequence of independent identically distributed uniform random variables. Initiated by Fan and Wu \cite{FanWu04}, who considered the one-dimensional case, the topic has since been investigated and generalised by numerous authors, see for example \cite{ EkstPers18, FJJS18}. These results strongly depend on the randomness of the sequence $\omega$. 
As a classical example of a uniformly distributed sequence, fix some $\alpha\in \R$ and consider the sequence $\omega=(\{n\alpha\})_{n\in \N}$, where $\{x\}$ denotes the fractional part of $x\in\R$. 
The metric properties of $W_{(\{n\alpha\})_{n\in \N}}(\Psi)$ in one dimension have been studied in \cite{Bugeaud03,   KimRamWang18, Tseng08} and in higher dimensions in \cite{BDGW23b,TKim23, HussWard23, Shapira13}. In this setting, it was shown by Chebyshev (in one dimension) and by Khintchine (in higher dimensions) that for $\alpha\in\R\backslash\Q$ the sequence $(\{n\alpha\})_{n\in \N}$ is well distributed \cite{Khin1948}. Recently, Moshchevitin proved metric properties on well-distributed sequences \cite{Mosh23} (his definition of well-distributed sequence differs from the classic one). In the following section, we begin with a general statement in which we consider no other conditions on $\omega$ other than being uniformly distributed. Recall the following definitions of discrepancy, see for example \cite[Chapter 2]{KuipersNiederreiter1974}. 

\begin{definition}[Discrepancy, Star-discrepancy]
Let $\omega$ be a sequence of points in $[0,1]^n$.
\begin{enumerate}[i.]
    \item The \textit{discrepancy} of $\omega$ is the function $D:\N\to\R_+$, $N\mapsto D_N(\omega)$, given by
    \[
    D_N =
    D_N(\omega)
    :=
    \sup_{R\subseteq [0,1]^n}
    \left| \frac{A(R;N,\omega)}{N} - \lambda_n(R)\right|, 
    \]
    where the supremum is taken over all rectangles $R\subseteq [0,1]^{n}$.
    \item The \textit{star-discrepancy} of $\omega$ is the function $D^*:\N\to\R_+$, $N\mapsto D_N^*(\omega)$, given by
    \[
    D_N^* =
    D_N^*(\omega)
    :=
    \sup_{\{\boldsymbol{0}\}\subset R\subseteq [0,1]^n}
    \left| \frac{A(R;N,\omega)}{N} - \lambda_n(R)\right|, 
    \]
    where the supremum is taken over all rectangles $R\subseteq [0,1]^{n}$ of the form $[0,t_1)\times \cdots [0,t_n)$ for some $t_1,\ldots, t_n> 0$.
\end{enumerate}
\end{definition}
It is well known \cite[Theorem 1.3]{KuipersNiederreiter1974} that for any sequence $\omega$ in $[0,1]^n$ and any $N\in\N$ we have $$D_N^*(\omega)\leq D_N(\omega) \leq 2^n D_N^*(\omega).$$
It is clear, see \cite[Ch. 2 Example 1.2]{KuipersNiederreiter1974}, that $\omega$ is a uniformly distributed sequence in $[0,1]^{n}$ if and only if $D_{N}\to 0$ (and by the above observation also $D_{N}^{*}\to 0$) as $N\to \infty$. To relate the notion of discrepancy to the framework of ubiquitous systems, it is crucial to understand how fast the discrepancy decays to zero. For this reason, we introduce the following definition.
\begin{definition}[$(\cN,v)$-Discrepancy satisfying sequence] \label{d.s.s}
    Let $\omega =  (\boldsymbol{\omega}_{j})_{j\geq 1 }$ be a sequence of points in $[0,1]^{n}$. We say that $\omega$ is a \textit{$(\cN,v)$-Discrepancy satisfying sequence}, which we write as $(\cN,v)$-d.s.s, if there exists a strictly increasing sequence $\cN= (N_{i})_{i\geq 1}$ of positive integers and a monotonic decreasing function $v:\N\to \R_{+}$ with $v(N)\to 0$ as $N\to \infty$ such that
\begin{equation*}
    D_{N_{i}}(\omega)  < v(N_{i}) \quad \text{ and } \quad \limsup_{i\to\infty} N_{i-1}v(N_{i})< 1 \quad (i\in\N).
\end{equation*}
\end{definition}
Clearly, every uniformly distributed sequence is a $(\mathcal{N},v)$-d.s.s, since $D_{N}\to 0$ as $N\to \infty$ one can choose an increasingly sparse sequence $\cN$ satisfying the conditions of Definition~\ref{d.s.s}. However, the sparsity of this sequence will become important in our main theorem. So it is desirable to pick $\cN$ as dense as possible, while still satisfying the conditions of the definition. \par 

In order to state our most general result, recall a function $f$ is $c$-regular with respect to a sequence $\cN=(N_j)_{j\geq 1}$ if, for constant $0<c<1$,
\begin{equation*} 
f(N_{i+1}) \leq c f(N_{i})
\end{equation*}
for all sufficiently large $i$. We prove the following Khintchine-type divergence statement.


\begin{theorem} \label{uniform khintchine}
Let $\Psi=(\psi_{1},\dots,\psi_{n})$ be an n-tuple of monotonic decreasing functions. Suppose that $\omega=( \boldsymbol{\omega}_{i} )_{i\geq 1}$ is a $(\cN,v)$-d.s.s. for $\cN=(  N_{i} )_{i\geq 1}$ and $v:\N\to \R_{+}$. Suppose that $\Psi$ or $v$ is $c$-regular for $\cN$, and that for some vector $\boldsymbol{\tau} =(\tau_1,\ldots,\tau_n)\in\R_+$ verifying
\[
\min\{\tau_1,\ldots, \tau_n\}>0,\quad 
\sum_{j=1}^n \tau_j = 1
\]
every large $N\in\N$ satisfies
\[
\psi_j(N)\leq v(N)^{\tau_j}
\quad
(1\leq j\leq n).
\]
Then
\begin{equation*}
    \lambda_n(W_{\omega}(\Psi))=1 \quad \text{\rm if }\quad \sum_{j=1}^{\infty} v(N_{j})^{-1}\prod_{i=1}^{n}\psi_{i}(N_{j}) = \infty.
\end{equation*}
\end{theorem}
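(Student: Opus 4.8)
The plan is to realise $W_\omega(\Psi)$ as a superset of a $\limsup$ set amenable to the weighted ubiquity machinery, and then to show that the discrepancy hypothesis furnishes the required ubiquity. Since each $\psi_i$ is monotonic decreasing, for any index $j$ with $N_{m-1}<j\leq N_m$ we have $\psi_i(j)\geq\psi_i(N_m)$, so the approximation box associated with $\boldsymbol{\omega}_j$ contains
\[
R_j:=\prod_{i=1}^n\bigl(\omega_{j,i}-\psi_i(N_m),\,\omega_{j,i}+\psi_i(N_m)\bigr).
\]
Hence $W_\omega(\Psi)\supseteq\Lambda:=\limsup_{m\to\infty}\bigcup_{N_{m-1}<j\leq N_m}R_j$, and it suffices to prove $\lambda_n(\Lambda)=1$. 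I take the resonant points to be the $\boldsymbol{\omega}_j$ organised by the scales $N_m$, with ubiquity half-width $v(N_m)^{\tau_i}$ in coordinate $i$; the resulting ubiquity box then has volume $\prod_i v(N_m)^{\tau_i}=v(N_m)$, while the target box $R_j$ has volume $\asymp\prod_i\psi_i(N_m)$.

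The heart of the proof is the weighted local ubiquity: there is $\kappa>0$ such that for every ball $B\subseteq[0,1]^n$ and all sufficiently large $m$, the boxes of half-widths $v(N_m)^{\tau_i}$ centred at the \emph{new} points $\{\boldsymbol{\omega}_j:N_{m-1}<j\leq N_m\}$ cover at least a proportion $\kappa$ of $B$. To establish this I tile $B$ by congruent rectangles with side length $v(N_m)^{\tau_i}$ in coordinate $i$, each of volume $v(N_m)$, giving $\asymp\lambda_n(B)/v(N_m)$ tiles; a box of half-width $v(N_m)^{\tau_i}$ centred at any point of a tile covers that tile. Because $N_m\in\cN$ we have $D_{N_m}(\omega)<v(N_m)$, so $A(R;N_m,\omega)>N_m(\lambda_n(R)-D_{N_m})>0$ shows every tile contains a point of index at most $N_m$. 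Since $N_{m-1}\in\cN$ as well, $D_{N_{m-1}}(\omega)\to 0$, and the discrepancy bound at level $N_{m-1}$ gives at most $N_{m-1}(\lambda_n(B)+D_{N_{m-1}})\asymp N_{m-1}\lambda_n(B)$ old points inside $B$, meeting at most that many tiles, a proportion $\asymp N_{m-1}v(N_m)$ of all tiles in $B$. As $\limsup_m N_{m-1}v(N_m)<1$, for large $m$ this proportion is at most $1-\varepsilon$ for some fixed $\varepsilon>0$, whence at least a proportion $\varepsilon$ of the tiles contain no old point but (by the discrepancy at level $N_m$) contain a new point. The boxes about those new points cover the corresponding tiles, delivering local ubiquity with $\kappa=\varepsilon$.

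Granted local ubiquity, I would apply the weighted ubiquity theorem of Kleinbock--Wang (equivalently the weighted mass transference principle of Wang--Wu) to transfer from the covering by the large ubiquity boxes to the genuine $\limsup$ set $\Lambda$ built from the small target boxes $R_j$. The divergence input demanded by that theorem is exactly the divergence of the sum of volume ratios,
\[
\sum_{m}\frac{\prod_{i=1}^n\psi_i(N_m)}{v(N_m)}=\sum_{m}v(N_m)^{-1}\prod_{i=1}^n\psi_i(N_m)=\infty,
\]
which is our hypothesis; the constraints $\min_i\tau_i>0$, $\sum_i\tau_i=1$ and $\psi_i(N)\leq v(N)^{\tau_i}$ guarantee that every target box sits inside its ubiquity box with the correct weights, so that the transference is admissible. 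The $c$-regularity of $\Psi$ or of $v$ along $\cN$ supplies the geometric-decay hypothesis of the theorem, controlling the ratio of the relevant function at consecutive scales and thereby enabling the divergence (quasi-independence) half of the argument. This gives $\lambda_n(\Lambda)=1$, and therefore $\lambda_n(W_\omega(\Psi))=1$.

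The step I expect to be the main obstacle is the local ubiquity in the second paragraph. The delicate point is the anisotropic bookkeeping: the discrepancy estimate controls arbitrary rectangles and must be applied simultaneously at the two levels $N_{m-1}$ and $N_m$ to the same family of tiles of volume $v(N_m)$, while the boxes of half-width $v(N_m)^{\tau_i}$ must be shown to cover each occupied tile. Keeping the absolute constants uniform across all balls $B$ (so that $\kappa$ is independent of $B$), handling the boundary tiles, and verifying that the chosen weighted ubiquity theorem accepts precisely this configuration of rectangular neighbourhoods, are where the real care is required.
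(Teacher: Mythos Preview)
Your proposal is correct and follows essentially the same route as the paper: define $\rho_i=v^{\tau_i}$, establish local ubiquity of $\omega$ with respect to $\cN$ and $\rho$ via a tiling/discrepancy argument (the paper packages this as Proposition~\ref{ubiquity statement}, proved through Lemmas~\ref{uniform dirichlet} and~\ref{standard cover}), and then invoke the Kleinbock--Wang theorem (Theorem~\ref{KW ambient measure}). Your tile-counting version of the ubiquity step is a minor combinatorial rephrasing of the paper's measure-theoretic subtraction, but the underlying idea---use $D_{N_m}<v(N_m)$ to place a point in every tile of volume $v(N_m)$, then use $\limsup N_{m-1}v(N_m)<1$ to ensure a positive proportion of tiles carry only new points---is identical.
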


The regularity condition imposed on the approximation functions in Theorem \ref{uniform khintchine} is rather restrictive. It might be possible to relax it, but we believe it does not seem possible within the methods used in this paper.

In \cite{Kiefer1961}, Kiefer showed that for \textit{almost every sequence}\footnote{Here and in the following, when we talk about properties holding for almost every sequence in $[0,1]^n$ we have in mind the probability space of sequences in $[0,1]^n$ along with its Borel $\sigma$-algebra and the product measure induced by the Lebesgue measure on each factor (see \cite[Theorem 8.23]{Kallenberg2021}).} $\omega$ we have
\begin{equation}\label{EQ:Kiefer}
\limsup_{N\to\infty} \;  D_N^*(\omega)\,\sqrt{\frac{2N}{\log\log N}} = 1.
\end{equation}
 Thus, for almost every sequence, the discrepancy cannot converge too fast to zero. A uniform lower bound on the discrepancy was obtained by K. Roth, see for example \cite[Ch. 2 Theorem 2.1]{KuipersNiederreiter1974}. Namely, he proved that if $n\geq2$, then every sequence $\omega$ in $[0,1]^n$ satisfies
\begin{equation}\label{Eq:RothDiscrepancyLowerEstimate}
D_N(\omega)
\gg_n
\frac{(\log N)^{\frac{n-1}{2}}}{N}.
\end{equation}

Moreover, by a theorem of van Aardenne-Ehrenfest \cite[Ch. 2 Corollary 2.1]{KuipersNiederreiter1974}, states that for any sequence $\omega$ 
\[
\limsup_{N\to \infty}\, ND_N^*(\omega)=\infty.
\]
Our second result is the following
\begin{theorem}\label{TEO:UDM1S:Measure:01}
    Let $(\tau_1,\ldots,\tau_n)\in\R_{+}^n$ satisfy
    \[
    \min\{\tau_1,\ldots,\tau_n\}>0 \quad\text{ and }\quad \sum_{j=1}^n\tau_j = 1.
    \]
Let $\Psi=(\psi_{1},\dots,\psi_{n})$ be an $n$-tuple of positive, monotonically decreasing functions on $\N$ such that for every large $N\in\N$
     \begin{equation*}
         \psi_j(N)\leq \left(\frac{\log\log N}{2N}\right)^{\frac{\tau_j}{2}}
\quad
(1\leq j\leq n).
     \end{equation*}
     For almost every uniformly distributed sequence $\omega$, if there is some $M>1$ such that
    \[
    \sum_{j=1}^{\infty}\frac{M^{\frac{3^j}{2}}}{\sqrt{j}} \prod_{i=1}^n \psi_i\left(M^{3^j}\right)=\infty,
    \]
    then $\lambda_n\left( W_{\omega}(\Psi)\right)=1$.
\end{theorem}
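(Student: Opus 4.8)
The plan is to derive this statement directly from Theorem \ref{uniform khintchine} by manufacturing, for almost every uniformly distributed sequence $\omega$, a decreasing function $v$ and a sparse sequence $\cN$ that witness the $(\cN,v)$-d.s.s.\ property, chosen so that the divergence series of Theorem \ref{uniform khintchine} matches the hypothesised one. The key input is Kiefer's law \eqref{EQ:Kiefer}. Since almost every sequence in $[0,1]^n$ is uniformly distributed, \eqref{EQ:Kiefer} holds for almost every u.d.s., and the value $1$ of the $\limsup$ supplies (via the ``$\leq 1$'' direction) a genuine \emph{eventual upper bound} on the star-discrepancy, not merely an infinitely-often lower bound.

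First I would set
\[
v(N):=2^{n+1}\sqrt{\frac{\log\log N}{2N}},
\]
which is positive, tends to $0$, and is eventually decreasing, so may be taken decreasing after redefinition on an initial segment. Fix $\omega$ satisfying \eqref{EQ:Kiefer}. For any $\varepsilon\in(0,1)$, \eqref{EQ:Kiefer} gives $D_N^*(\omega)\leq(1+\varepsilon)\sqrt{\log\log N/2N}$ for all large $N$; combined with the standard bound $D_N(\omega)\leq 2^nD_N^*(\omega)$ recalled above, this yields $D_N(\omega)<v(N)$ for all large $N$. Next I would choose $\cN=(N_j)_{j\geq 1}$ with $N_j:=\lfloor M^{3^j}\rfloor$, so that $N_j$ grows like $N_{j-1}^3$.

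With these choices I would check the hypotheses of Theorem \ref{uniform khintchine}. The inequality $D_{N_j}(\omega)<v(N_j)$ holds for all large $j$ by the previous step (discarding finitely many initial terms and reindexing affects neither the limsup set $W_{\omega}(\Psi)$ nor the divergence of the tail), while
\[
N_{j-1}v(N_j)\asymp M^{3^{j-1}}\cdot\frac{\sqrt{\log\log M^{3^j}}}{M^{3^j/2}}\asymp\sqrt{j}\,M^{-3^j/6}\longrightarrow 0,
\]
so $\limsup_j N_{j-1}v(N_j)<1$ and $\omega$ is $(\cN,v)$-d.s.s. Regularity is immediate for $v$: since $N_{j+1}\sim N_j^3$ and $\log\log N_{j+1}\sim\log\log N_j$, one has $v(N_{j+1})/v(N_j)\asymp N_j^{-1}\to 0$, so $v$ is $c$-regular for every $c\in(0,1)$. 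The growth constraint holds because, by hypothesis, $\psi_j(N)\leq(\log\log N/2N)^{\tau_j/2}\leq v(N)^{\tau_j}$ for large $N$, using $2^{n+1}\geq 1$ and $\tau_j>0$.

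It remains to match the two series. Using $\log\log M^{3^j}=\log(3^j\log M)\sim j\log 3$, I would compute
\[
v(N_j)^{-1}=\frac{1}{2^{n+1}}\sqrt{\frac{2N_j}{\log\log N_j}}\asymp\frac{M^{3^j/2}}{\sqrt{j}},
\]
so that $\sum_j v(N_j)^{-1}\prod_{i=1}^n\psi_i(N_j)$ diverges if and only if $\sum_j M^{3^j/2}j^{-1/2}\prod_{i=1}^n\psi_i(M^{3^j})$ does, which is the hypothesis; Theorem \ref{uniform khintchine} then gives $\lambda_n(W_{\omega}(\Psi))=1$. The main obstacle is the bookkeeping in this final comparison: one must verify that the constant $2^{n+1}$, the floor correction $N_j=\lfloor M^{3^j}\rfloor$ versus $M^{3^j}$, and the monotonicity of each $\psi_i$ perturb each summand only by factors bounded above and below, so that a term-by-term comparison preserves divergence.
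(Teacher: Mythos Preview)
Your argument is correct and follows essentially the same route as the paper: use Kiefer's law \eqref{EQ:Kiefer} to get an eventual upper bound on the discrepancy, take $v(N)\asymp\sqrt{\log\log N/(2N)}$ and $N_j\asymp M^{3^j}$, verify the $(\cN,v)$-d.s.s.\ and regularity conditions, and then invoke Theorem~\ref{uniform khintchine} (equivalently Theorem~\ref{KW ambient measure}) after checking that the divergence series match up to bounded factors. Your treatment is in fact slightly more scrupulous than the paper's, in that you explicitly pass from $D_N^*$ to $D_N$ via the factor $2^n$ and you handle the integrality of $N_j$ with a floor.
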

Next, we consider a class of sequences characterised by low discrepancy as compared to the 'generic' sequence.
\begin{definition}[Low discrepancy]
    A sequence $\omega$ has \textit{low discrepancy} if
\begin{equation*}
    D_{N}(\omega) \ll \frac{(\log N)^{n}}{N}.
\end{equation*}
\end{definition}

It turns out that the sequence $(M^{3^j})_{j\geq 1}$ in Theorem \ref{TEO:UDM1S:Measure:01} can be replaced with $(M^{j^2})_{j\geq 1}$ when $\omega$ is a low-discrepancy sequence. 

\begin{theorem}\label{TEO:UDM1S:Measure:02}
    Let $(\tau_1,\ldots,\tau_n)\in\R_{+}$ satisfy
    \[
    \min\{\tau_1,\ldots,\tau_n\}>0 \quad\text{ and }\quad \sum_{j=1}^n\tau_j = 1.
    \]
     Let $\omega$ be a low-discrepancy sequence and consider $\Psi=(\psi_{1},\dots,\psi_{n})$ such that for every large $N\in\N$
     \begin{equation*}
         \psi_j(N)\leq D_N^{\tau_j}
\quad
(1\leq j\leq n).
     \end{equation*}
    If for some $M>1$ we have
    \[
    \sum_{j=1}^{\infty}\frac{M^{j^2}}{j^{2n}} \prod_{i=1}^n \psi_i\left(M^{j^2}\right)=\infty,
    \]
    then $\lambda_n\left( W_{\omega}(\Psi)\right)=1$.
\end{theorem}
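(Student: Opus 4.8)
The plan is to obtain Theorem~\ref{TEO:UDM1S:Measure:02} as a direct consequence of the Khintchine-type divergence statement in Theorem~\ref{uniform khintchine}. The whole task reduces to packaging the low-discrepancy hypothesis into an $(\cN,v)$-d.s.s.\ whose auxiliary function $v$ is comparable to the discrepancy and whose sampling sequence is the prescribed lacunary sequence $(M^{j^2})_{j\geq1}$, and then checking that the divergence condition of Theorem~\ref{uniform khintchine} coincides with the one assumed here.

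Concretely, I would first fix $C_0>0$ with $D_N(\omega)\leq C_0(\log N)^n/N$ for all large $N$ (available since $\omega$ has low discrepancy), set
\[
v(N):=C\,\frac{(\log N)^n}{N},\qquad C:=C_0+1,
\]
and redefine $v$ on an initial segment so that $v\colon\N\to\R_+$ is monotonic decreasing with $v(N)\to0$; this is harmless because Definition~\ref{d.s.s} and the hypotheses of Theorem~\ref{uniform khintchine} are all asymptotic in $N$. Then I would take $\cN=(N_j)_{j\geq1}$ with $N_j:=\lfloor M^{j^2}\rfloor$, which is strictly increasing once $j$ is large.

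The hypotheses of Theorem~\ref{uniform khintchine} then follow from four short computations. Since $D_N(\omega)<v(N)$ for large $N$ we get $D_{N_j}(\omega)<v(N_j)$, and
\[
N_{j-1}v(N_j)=C(\log N_j)^n\frac{N_{j-1}}{N_j}\asymp (j^2\log M)^n\,M^{-2j+1}\xrightarrow[j\to\infty]{}0,
\]
so $\limsup_j N_{j-1}v(N_j)=0<1$ and $\omega$ is an $(\cN,v)$-d.s.s.; the same estimate gives $v(N_{j+1})/v(N_j)\asymp(1+j^{-1})^{2n}M^{-2j-1}\to0$, so $v$ is $c$-regular for $\cN$ for any $c\in(0,1)$. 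Because $x\mapsto x^{\tau_j}$ is increasing on $[0,\infty)$ and $\tau_j>0$, the assumed bound $\psi_j(N)\leq D_N^{\tau_j}$ upgrades to $\psi_j(N)\leq D_N^{\tau_j}\leq v(N)^{\tau_j}$ for large $N$, which is the domination hypothesis with weight $\boldsymbol\tau$. Finally, using $N_j\geq\tfrac12 M^{j^2}$, $(\log N_j)^n\leq(j^2\log M)^n$ and $\psi_i(N_j)\geq\psi_i(M^{j^2})$,
\[
\sum_{j\geq1}v(N_j)^{-1}\prod_{i=1}^n\psi_i(N_j)\;\gg_{M,n}\;\sum_{j\geq1}\frac{M^{j^2}}{j^{2n}}\prod_{i=1}^n\psi_i\!\left(M^{j^2}\right)=\infty,
\]
so the series of Theorem~\ref{uniform khintchine} diverges and that theorem yields $\lambda_n(W_\omega(\Psi))=1$.

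I do not expect a genuine obstacle here: the result is essentially a specialization of Theorem~\ref{uniform khintchine}, and the only real content is the choice of $(\cN,v)$. The point worth emphasising, and the sole place where low discrepancy is used, is the tension inside Definition~\ref{d.s.s} between needing $v\gtrsim D_N$ and needing the lacunarity bound $\limsup_j N_{j-1}v(N_j)<1$ to hold along a sequence $\cN$ that is as dense as possible (so as to maximise the divergence series). A generic sequence has discrepancy only of order $N^{-1/2}$, which forces $N_j>N_{j-1}^2$ and hence the sparse sampling $N_j=M^{3^j}$ of Theorem~\ref{TEO:UDM1S:Measure:01}; the near-optimal rate $N^{-1}(\log N)^n$ weakens the requirement to $N_{j-1}/N_j\to0$ faster than $(\log N_j)^{-n}$, which the denser choice $N_j=M^{j^2}$ already meets. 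The remaining care is purely bookkeeping of the floor function and the implied constants, all of which point in the favourable direction for divergence.
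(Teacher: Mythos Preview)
Your proposal is correct and follows essentially the same route as the paper: choose $v(N)\asymp (\log N)^n/N$ dominating the discrepancy, take $N_j=M^{j^2}$, verify the $(\cN,v)$-d.s.s.\ condition via $N_{j-1}v(N_j)\asymp j^{2n}M^{-(2j-1)}\to 0$, check $c$-regularity, and then invoke the Khintchine-type divergence result. The only cosmetic difference is that you pass through Theorem~\ref{uniform khintchine} while the paper applies Proposition~\ref{ubiquity statement} and Theorem~\ref{KW ambient measure} directly, but since Theorem~\ref{uniform khintchine} is itself proved from those two ingredients this is the same argument; your handling of the floor in $N_j=\lfloor M^{j^2}\rfloor$ is in fact slightly more careful than the paper's.
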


A similar problem was addressed by Boshernitzan and Chaika in the non-weighted case \cite[Example 8]{BoshernitzanChaika2012}. As they point out, part of the proof of Theorem \ref{TEO:BoshChai} may be obtained from the work of Beresnevich, Dickinson, and Velani on ubiquitous systems \cite{BDV06}. 

\begin{theorem}[{{\cite[Example 1]{BoshernitzanChaika2012}}}]\label{TEO:BoshChai}
    Take $n\in\N$. Let $\psi:\N\to\R_+$ be any positive function and $\Psi=(\psi,\ldots, \psi)$. Almost every sequence $\omega = (\boldsymbol{\omega}_j)_{j\geq 1}$ on $[0,1]^n$ satisfies
    \[
    \lambda_n\left( W_{\omega}(\Psi)  \right)
    =
    \begin{cases}
        0 \quad \text{\rm if } \quad \displaystyle\sum_{j=1}^{\infty} \psi^n(j)<\infty, \\
        1 \quad \text{\rm if } \quad \displaystyle\sum_{j=1}^{\infty} \psi^n(j)=\infty. 
    \end{cases}    
    \]
\end{theorem}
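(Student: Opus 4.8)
The plan is to bypass the ubiquity machinery entirely and reduce the whole dichotomy to Fubini's theorem together with the two Borel--Cantelli lemmas, exploiting that under the product measure the coordinates $\boldsymbol{\omega}_j$ are independent and uniform on $[0,1]^n$. Write $B_j(\omega) = \{\mathbf{x} \in [0,1]^n : |x_i - \omega_{j,i}| < \psi(j)\ (1 \le i \le n)\}$ for the open box of side $2\psi(j)$ centred at $\boldsymbol{\omega}_j$, so that $W_\omega(\Psi) = \limsup_j B_j(\omega)$. Fixing $\mathbf{x}$, the events $E_j := \{\mathbf{x} \in B_j(\omega)\}$ depend on the disjoint blocks of coordinates $\boldsymbol{\omega}_j$ and are therefore mutually independent, with
\[
\Pr[E_j] = \prod_{i=1}^n \lambda\big((x_i - \psi(j), x_i + \psi(j)) \cap [0,1]\big).
\]
A direct estimate of each factor (paying attention to the truncation when $\mathbf{x}$ is near $\partial[0,1]^n$) gives $\psi(j)^n \le \Pr[E_j] \le 2^n\psi(j)^n$ whenever $\psi(j) < 1$, uniformly in $\mathbf{x}$; the terms with $\psi(j) \ge 1$ contribute $\Pr[E_j] = 1$ and only help divergence. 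Hence $\sum_j \Pr[E_j]$ and $\sum_j \psi^n(j)$ converge or diverge together, for every $\mathbf{x}$.

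Next I would read off $\Pr_\omega[\mathbf{x} \in W_\omega(\Psi)]$ pointwise in $\mathbf{x}$ from Borel--Cantelli. If $\sum \psi^n(j) < \infty$, the first lemma gives $\Pr_\omega[\mathbf{x} \in W_\omega(\Psi)] = 0$; if $\sum \psi^n(j) = \infty$, independence lets the second lemma give $\Pr_\omega[\mathbf{x} \in W_\omega(\Psi)] = 1$. I would then transfer this to the measure of the set by Tonelli's theorem, applied to the jointly measurable indicator of $\{(\mathbf{x},\omega): \mathbf{x} \in W_\omega(\Psi)\}$ (no integrability hypothesis is needed, as the integrand is non-negative):
\[
\E_\omega\big[\lambda_n(W_\omega(\Psi))\big] = \int_{[0,1]^n} \Pr_\omega[\mathbf{x} \in W_\omega(\Psi)]\, d\lambda_n(\mathbf{x}).
\]

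In the convergent case this expectation is $0$, and since $\lambda_n(W_\omega(\Psi)) \ge 0$ it must vanish for almost every $\omega$, giving the first branch of the dichotomy. In the divergent case the expectation equals $1$; here I would use the elementary but crucial observation that $\lambda_n(W_\omega(\Psi)) \le \lambda_n([0,1]^n) = 1$ always, so the non-negative variable $1 - \lambda_n(W_\omega(\Psi))$ has zero mean and hence vanishes almost surely, forcing $\lambda_n(W_\omega(\Psi)) = 1$ for almost every $\omega$. This boundedness trick is precisely what replaces an appeal to ubiquity for the full-measure conclusion, although, as the authors note, one could instead route the divergence half through the Beresnevich--Dickinson--Velani framework. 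The analysis carries no genuine analytic difficulty; I expect the only points demanding care to be the uniform-in-$\mathbf{x}$ two-sided bound on $\Pr[E_j]$ with its boundary truncation, the disposal of the degenerate terms $\psi(j) \ge 1$, and the (routine) joint-measurability scaffolding inside the infinite product space that legitimises Tonelli and the use of independence.
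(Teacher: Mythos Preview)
Your argument is correct. The Fubini/Tonelli reduction together with the two Borel--Cantelli lemmas is exactly the right tool here, and your handling of the boundary truncation, the degenerate terms $\psi(j)\geq 1$, and the ``boundedness trick'' $0\leq \lambda_n(W_\omega(\Psi))\leq 1$ are all sound.

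As for comparison: the paper does not give its own proof of this statement. Theorem~\ref{TEO:BoshChai} is quoted from Boshernitzan--Chaika \cite{BoshernitzanChaika2012} as external input and used only to motivate the weighted conjecture that follows. The paper merely remarks that the divergence half could alternatively be routed through the Beresnevich--Dickinson--Velani ubiquity framework \cite{BDV06}. Your direct probabilistic proof bypasses ubiquity entirely, which is the natural and more economical route in the unweighted i.i.d.\ setting; the ubiquity machinery earns its keep only when one moves to the weighted or deterministic-sequence questions that form the paper's actual contribution.
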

\begin{remark}
There are examples of uniformly distributed sequences $\omega=(\boldsymbol{\omega}_j)_{j\geq 1}$ for which there is some positive function $\psi:\N\to\R_+$ satisfying
\[
\sum_{j=1}^{\infty} \psi^n(j)=\infty
\quad\text{ and }\quad
\lambda_n\left( W_{\omega}(\psi, \ldots, \psi) \right)< 1.
\]
In fact, by Kurzweil's Theorem on inhomogeneous approximation, this is the case when $n=1$, $\alpha$ is an irrational number which is not badly approximable, and $\omega_j=\{j\alpha\}$ for $j\in\N$. 
\end{remark}
We believe the weighted version of Theorem \ref{TEO:BoshChai} is true.
\begin{conjecture}
    For almost every sequence $\omega$ in $[0,1]^n$, for every $n$-tuple of non-increasing functions $\Psi=(\psi_1,\ldots, \psi_n)$ we have
    \[
    \lambda_n(W_{\omega}(\Psi))
    =
    \begin{cases}
        0 \quad \text{\rm if } \quad \displaystyle\sum_{j=1}^{\infty} \prod_{i=1}^n \psi_i(j) < \infty, \\ 
        1 \quad \text{\rm if } \quad \displaystyle\sum_{j=1}^{\infty} \prod_{i=1}^n \psi_i(j) = \infty. \\ 
    \end{cases}
    \]
\end{conjecture}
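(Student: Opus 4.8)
The convergence half is unconditional and needs no exceptional set, so I would dispose of it first. For \emph{any} fixed sequence $\omega$, writing $B_j$ for the box $\prod_{i=1}^n(\omega_{j,i}-\psi_i(j),\omega_{j,i}+\psi_i(j))\cap[0,1]^n$ centred at $\boldsymbol\omega_j$, one has $W_\omega(\Psi)=\limsup_j B_j$ and $\lambda_n(B_j)\leq 2^n\prod_{i=1}^n\psi_i(j)$. Hence $\sum_j\prod_i\psi_i(j)<\infty$ forces $\sum_j\lambda_n(B_j)<\infty$, and the convergence Borel--Cantelli lemma gives $\lambda_n(W_\omega(\Psi))=0$ for every $\omega$ simultaneously. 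The entire difficulty therefore lies in the divergence case.

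For the divergence case with a \emph{single, fixed} $\Psi$, the plan is to use the independence of the coordinates $\boldsymbol\omega_1,\boldsymbol\omega_2,\dots$ under the product measure. Fix $\mathbf x$ in the interior of $[0,1]^n$. The events $E_j=\{\mathbf x\in B_j\}$ are independent in $\omega$, and for all large $j$, provided $\psi_i(j)<\min\{x_i,1-x_i\}$, one has $\mathbb P(E_j)=\prod_{i=1}^n 2\psi_i(j)=2^n\prod_i\psi_i(j)$ (if some $\psi_i$ does not tend to $0$ the conclusion is even easier). Thus $\sum_j\mathbb P(E_j)=\infty$, and the divergence Borel--Cantelli lemma yields $\mathbb P(\mathbf x\in W_\omega(\Psi))=1$. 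Integrating over $\mathbf x$ by Tonelli's theorem gives $\mathbb E[\lambda_n(W_\omega(\Psi))]=1$, and since $\lambda_n(W_\omega(\Psi))\leq 1$ this forces $\lambda_n(W_\omega(\Psi))=1$ for almost every $\omega$. The same conclusion also follows from Kolmogorov's $0$--$1$ law, since $\omega\mapsto\lambda_n(\limsup_j B_j)$ is a tail function of the i.i.d.\ sequence and is hence almost surely a constant, necessarily its mean $1$. This already proves the weaker statement with quantifiers ``for every $\Psi$, for almost every $\omega$''.

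The genuine difficulty, and the reason the statement is only conjectured, is the exchange of quantifiers to ``for almost every $\omega$, for every $\Psi$''. My plan is to reduce the uncountable family of admissible $\Psi$ to a rigid determining subfamily, exploiting that $W_\omega$ is monotone under coordinatewise shrinking of $\Psi$: it then suffices to dominate each divergent $\Psi$ from below by a more rigid function that still diverges. Two reductions are available. First, since the product $\prod_i\psi_i$ is non-increasing, Cauchy condensation shows that divergence of $\sum_j\prod_i\psi_i(j)$ depends only on the values $\psi_i(2^k)$, so one may replace each $\psi_i$ by the step function equal to $\psi_i(2^{k+1})$ on the block $2^k\leq j<2^{k+1}$. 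Second, rounding each such value down to the nearest power of $2$ costs only a factor $2^{-n}$ in the series and hence preserves divergence. This lands us among step functions that are constant on dyadic blocks with dyadic values. The obstruction is that even this family is uncountable — it is parametrised by non-decreasing integer exponent sequences — so a plain countable intersection of the full-measure sets from the previous paragraph is unavailable.

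To close this gap one needs a covering argument valid \emph{simultaneously} for all such step functions. The natural route, in the spirit of this paper, is to fix once and for all an $\omega$ whose discrepancy obeys the law of the iterated logarithm \eqref{EQ:Kiefer}, and to use the resulting two-sided control of $A(B;N,\omega)$ over \emph{all} boxes $B$ to turn the random covering of each dyadic block into a deterministic, ubiquity-type estimate; a second-moment (quasi-independence) estimate across blocks should then force $\limsup_j B_j$ to have full measure for every divergent monotone $\Psi$ at once. The crux — and precisely what I do not expect these methods to resolve — is that the ubiquity and mass-transference machinery available here requires a $c$-regularity or lacunarity hypothesis on the approximating function, exactly as in Theorem~\ref{uniform khintchine}; a general monotone $\Psi$ with a slowly divergent series need not satisfy it, and removing this hypothesis is the heart of the conjecture.
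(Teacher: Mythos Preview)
The statement is a \emph{conjecture} in the paper, and the paper does not prove it. The authors only remark that the convergence half is a consequence of the Borel--Cantelli lemma, and that the divergence half is \emph{expected} because of Theorem~\ref{TEO:BoshChai} and the interpretation of a generic sequence as an i.i.d.\ uniform sample. Your treatment is therefore entirely consistent with the paper's position, and in fact goes further.

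Your convergence argument is exactly what the paper has in mind. Your divergence argument for a \emph{fixed} $\Psi$ (Borel--Cantelli plus Tonelli, or equivalently Kolmogorov's $0$--$1$ law) is correct and yields the quantifier order ``for every $\Psi$, for almost every $\omega$''; the paper does not spell this out but it is the natural weighted extension of the Boshernitzan--Chaika argument they cite. You then correctly isolate the real content of the conjecture --- the swap to ``for almost every $\omega$, for every $\Psi$'' --- and observe that dyadic discretisation of the $\psi_i$ still leaves an uncountable family, so a naive intersection of full-measure sets does not close the gap. Your diagnosis that the ubiquity/mass-transference machinery of the paper requires a $c$-regularity or lacunarity hypothesis on $\Psi$ (cf.\ condition (III) in Theorem~\ref{KW ambient measure} and the hypotheses of Theorem~\ref{uniform khintchine}), and that removing this hypothesis is precisely the obstruction, matches the authors' own assessment that their methods cannot reach the conjecture.

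In short: there is no proof in the paper to compare against; your proposal correctly proves what can be proved by elementary means, and correctly identifies both the open part and why the paper's techniques do not resolve it.
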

The convergence half is a consequence of the Borel-Cantelli lemma. The divergence part is expected because of Theorem \ref{TEO:BoshChai} and since we can interpret a uniformly distributed sequence as a generic realisation of a sequence of independent identically distributed uniform random variables.

Finally, we compute the Hausdorff dimension of $W_{\omega}(\Psi)$, $\dimh W_{\omega}(\Psi)$, for certain functions $\Psi$ and sequences $\omega$ with low discrepancy.
\begin{theorem}\label{Jarnik Besicovitch uniform}
    Let $\omega$ be a low discrepancy sequence and let $\boldsymbol{\tau}=(\tau_{1},\dots, \tau_{n})\in \R^{n}_{+}$ satisfy
\begin{equation*}
    \sum_{i=1}^{n}\tau_{i}>1\, .
\end{equation*}
Then, for $\Psi(N)=(N^{-\tau_{1}},\dots, N^{-\tau_{n}})$, we have that
\begin{equation*}
    \dimh W_{\omega}(\Psi) = \min_{1\leq j \leq n}\left\{ \frac{1+\sum_{i:\tau_{j}\geq \tau_{i}}(\tau_{j}-\tau_{i})}{\tau_{j}}\right\}.
\end{equation*}
\end{theorem}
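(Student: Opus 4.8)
The plan is to establish the two matching bounds $\dimh W_\omega(\Psi)\le m$ and $\dimh W_\omega(\Psi)\ge m$, where $m:=\min_{1\le k\le n}F_k$ with $F_k:=(1+\sum_{i:\tau_i<\tau_k}(\tau_k-\tau_i))/\tau_k$ (the condition $\tau_k\ge\tau_i$ in the statement and $\tau_i<\tau_k$ here agree, since equal-$\tau$ terms contribute $0$). Write $R_j$ for the rectangle centred at $\boldsymbol\omega_j$ with $i$-th semi-axis $j^{-\tau_i}$, so that $W_\omega(\Psi)=\limsup_j R_j$. The upper bound needs no hypothesis beyond $\tau_i>0$ and is a direct covering argument: fixing an index $k$, cover each $R_j$ by cubes of side $j^{-\tau_k}$; in every coordinate $i$ with $\tau_i\ge\tau_k$ the side $j^{-\tau_i}$ is already at most $j^{-\tau_k}$ and costs one cube, while each coordinate with $\tau_i<\tau_k$ costs $\approx j^{\tau_k-\tau_i}$ cubes. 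Thus $R_j$ is covered by $\approx j^{\sum_{i:\tau_i<\tau_k}(\tau_k-\tau_i)}$ cubes of side $j^{-\tau_k}$, and the $s$-dimensional cost of $\bigcup_{j\ge J}R_j$ is $\ll\sum_{j\ge J}j^{\sum_{i:\tau_i<\tau_k}(\tau_k-\tau_i)-s\tau_k}$. This tail converges once $s>F_k$, forcing $\mathcal{H}^s(\limsup_j R_j)=0$; optimising over $k$ gives $\dimh W_\omega(\Psi)\le m$.

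For the lower bound (the substantial half) I would first convert the low-discrepancy hypothesis into a ubiquity statement. Partitioning $[0,1]^n$ into $\approx N$ axis-parallel subcubes of side $\approx N^{-1/n}$ and applying $D_N(\omega)\ll(\log N)^n/N$ to a subcube $Q$ gives $A(Q;N,\omega)\ge N\lambda_n(Q)-ND_N(\omega)$; comparing the counts at $N=2^k$ and $N=2^{k-1}$ then shows that, once the subcube side is a large multiple of $(\log 2^k)\,2^{-k/n}$, every subcube contains some $\boldsymbol\omega_j$ with $j\in(2^{k-1},2^k]$. Consequently $\limsup_j B(\boldsymbol\omega_j,r_j)=[0,1]^n$ for $r_j=C(\log j)\,j^{-1/n}$, which is exactly the full-measure ubiquity input required by the weighted theory. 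Writing the rectangle semi-axes as $j^{-\tau_i}=r_j^{\,a_i}$ with $a_i=n\tau_i$ (the logarithmic factors in $r_j$ affect no Hausdorff dimension and can be absorbed by perturbing the exponents and letting the perturbation vanish), I would then apply the weighted mass transference principle of Wang--Wu, equivalently the weighted ubiquity framework of Kleinbock--Wang, to the system $\{(\boldsymbol\omega_j,r_j)\}$ with weight vector $(n\tau_1,\dots,n\tau_n)$. This yields $\dimh\limsup_j R_j\ge\min_k(n+\sum_{i:\tau_i<\tau_k}n(\tau_k-\tau_i))/(n\tau_k)=\min_k F_k=m$, matching the upper bound.

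The main obstacle lies in applying the transference principle in the directions where $\tau_i<1/n$, i.e.\ $a_i<1$, in which the approximating rectangle is wider than the ubiquity ball. One is tempted to truncate these widths to the ball radius, but a direct computation shows this strictly lowers the predicted exponent (the wide directions genuinely increase the dimension), so the truncation is inadmissible; instead one must invoke the version of the principle valid for arbitrary positive weights and verify that its hypotheses---the full-measure ubiquity produced in the previous step together with the mild regularity of $r_j$---are met. A secondary, routine point is to check that the minimising direction satisfies $\tau_k\ge 1/n$: the largest-$\tau$ direction already gives $F_k=n+(1-\sum_i\tau_i)/\tau_k<n$ since $\sum_i\tau_i>1$, whereas any direction with $\tau_k<1/n$ gives $F_k\ge 1/\tau_k>n\ge\dimh W_\omega(\Psi)$. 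This simultaneously confirms that the formula is consistent with $\dimh W_\omega(\Psi)\le n$ and that the wide directions, although outside the classical $a_i\ge 1$ regime, never determine the value $m$.
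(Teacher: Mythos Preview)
Your upper-bound argument is correct and identical to the paper's. For the lower bound, the architecture (discrepancy $\Rightarrow$ local ubiquity $\Rightarrow$ Wang--Wu transference) also matches, and when every $\tau_i\ge 1/n$ your argument goes through; this is exactly the paper's case (i), with ubiquity exponents $a_i=1/n$ and $t_i=\tau_i-1/n\ge 0$.

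The gap is the regime where some $\tau_i<1/n$. Theorem~\ref{MTPRR} (Wang--Wu) requires $\mathbf t\in\R_+^n$; with your ball-ubiquity this forces $t_i=n\tau_i-1\ge 0$, which fails in the wide directions. Your proposed remedy---``invoke the version of the principle valid for arbitrary positive weights''---does not address this: the constraint is not positivity of the weights $n\tau_i$ but the requirement that the approximating rectangle sit inside the ubiquity rectangle in every coordinate, and no version of Wang--Wu dropping that requirement is cited. You correctly note that truncating the wide sides strictly lowers the bound, and that the minimiser always has $\tau_k\ge 1/n$, but neither observation licenses applying the theorem when some $t_i<0$.

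The paper closes this gap not by strengthening Wang--Wu but by exploiting a freedom in the ubiquity step that your sketch overlooks. Proposition~\ref{ubiquity statement} shows $\omega$ is locally ubiquitous with respect to $\rho_i(N)=v(N)^{a_i}$ for \emph{any} $\mathbf a$ with $\sum_i a_i=1$; your own counting argument adapts verbatim---partition into rectangles of those sidelengths rather than cubes. One then chooses $\mathbf a$ so that every $t_i=\tau_i-a_i\ge 0$: after ordering $\tau_1>\cdots>\tau_n$, set $a_i=\tau_i$ (hence $t_i=0$) for the indices with small $\tau_i$, and set the remaining $a_i$ equal to a common value $\widetilde D$ determined by $\sum_i a_i=1$. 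With this $\mathbf a$ the hypotheses of Theorem~\ref{MTPRR} are met, and the case analysis over $\cK_1,\cK_2,\cK_3$ recovers $\min_k F_k$.
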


We have the following result for exceptional uniformly distributed sequences.

\begin{theorem}\label{Jarnik Besicovitch uniform2}
    Let $n=1$ and $\omega$ be a uniformly distributed sequence such that 
    \begin{equation*}
        D_{N}(\omega)\ll \frac{1}{N}
    \end{equation*}
    for infinitely many $N\in\N$. Let $\tau>1$. Then, for $\Psi(N)=N^{-\tau}$, we have that
\begin{equation*}
    \dimh W_{\omega}(\Psi) = \frac{1}{\tau}.
\end{equation*}
\end{theorem}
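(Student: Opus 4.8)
The plan is to establish the two inequalities $\dimh W_\omega(\Psi)\le 1/\tau$ and $\dimh W_\omega(\Psi)\ge 1/\tau$ separately. Throughout I write $B(x,r)=(x-r,x+r)$ for the interval of radius $r$ about $x$, and recall that, in the notation of the paper, $W_\omega(\Psi)=\limsup_{j\to\infty}B(\omega_j,j^{-\tau})$.

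For the upper bound I would use the natural cover. For any $J\in\N$ the family $\{B(\omega_j,j^{-\tau})\}_{j\ge J}$ covers $W_\omega(\Psi)$ by intervals of diameter $2j^{-\tau}\to 0$, and for any $s>1/\tau$ one has $\tau s>1$, so
\[
\sum_{j\ge J}\left(2j^{-\tau}\right)^s=2^s\sum_{j\ge J}j^{-\tau s}\longrightarrow 0\qquad(J\to\infty).
\]
Hence $\HH^s(W_\omega(\Psi))=0$ for every $s>1/\tau$, giving $\dimh W_\omega(\Psi)\le 1/\tau$. This half uses nothing beyond the definition of the set, and in particular no hypothesis on the discrepancy.

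The lower bound is where the discrepancy hypothesis enters, and the key step is to produce a full-measure limsup set at the ``ubiquity scale'' $j^{-1}$. By hypothesis there are a constant $C>0$ and an infinite set $\cN=\{N_1<N_2<\cdots\}$ with $D_{N_k}(\omega)\le C/N_k$. Fixing $k$ and an interval $I\subseteq[0,1]$, the discrepancy bound gives $A(I;N_k,\omega)\ge |I|N_k-N_kD_{N_k}(\omega)\ge |I|N_k-C$, so any $I$ with $|I|>C/N_k$ contains some $\omega_j$ with $j\le N_k$. Consequently every $x\in[0,1]$ lies within $2C/N_k$ of some $\omega_{j_k}$ with $j_k\le N_k$, and since $j_k\le N_k$ we have $|x-\omega_{j_k}|<2C/N_k\le 2C/j_k$. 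I would then observe that for every $x$ outside the countable set $\{\omega_j:j\ge1\}$ the indices satisfy $j_k\to\infty$: otherwise some fixed $\omega_{j^*}$ would satisfy $|x-\omega_{j^*}|<2C/N_k\to 0$ along a subsequence, forcing $\omega_{j^*}=x$. Hence $x\in B(\omega_{j_k},2C/j_k)$ for infinitely many $j_k$, and
\[
\lambda\left(\limsup_{j\to\infty}B(\omega_j,2C/j)\right)=1.
\]
Finally I would transfer this to the required Hausdorff dimension via the Mass Transference Principle of Beresnevich and Velani. Fix $\delta\in(0,1/\tau)$ and put $s=1/\tau-\delta\in(0,1)$. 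Applied to the balls $B_j=B(\omega_j,j^{-\tau})$, the principle replaces each $B_j$ by its dilate $B(\omega_j,(j^{-\tau})^{s})=B(\omega_j,j^{\tau\delta-1})$; since $j^{\tau\delta}\ge 2C$ for all large $j$, these dilates eventually contain $B(\omega_j,2C/j)$, so $\limsup_j B(\omega_j,j^{\tau\delta-1})$ has full Lebesgue measure by the previous step. The Mass Transference Principle then yields $\HH^{s}(W_\omega(\Psi))=\infty$, whence $\dimh W_\omega(\Psi)\ge s=1/\tau-\delta$; letting $\delta\to 0$ gives $\dimh W_\omega(\Psi)\ge 1/\tau$, and combining with the upper bound finishes the proof.

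The main obstacle is the middle step: extracting, from a one-sided discrepancy bound valid only along the sparse subsequence $\cN$, a limsup set of balls of radius comparable to $j^{-1}$ that has full measure --- equivalently, verifying that such an $\omega$ is ubiquitous at scale $j^{-1}$ in the sense of Beresnevich, Dickinson and Velani. The delicate part is guaranteeing that the approximating indices tend to infinity, so that the covering is genuinely a limsup rather than a fixed finite family; above this is handled by excluding the countable set of sequence points. The constant $C$ coming from the discrepancy is immaterial to the final dimension, being absorbed by the parameter $\delta$, which is precisely why I work with $s=1/\tau-\delta$ instead of $s=1/\tau$ directly.
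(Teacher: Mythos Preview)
Your proof is correct. The upper bound is the same standard cover as in the paper. For the lower bound you take a genuinely different, more elementary route than the paper does.

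The paper proceeds by first verifying that $\omega$ is a $(\cN,v)$-d.s.s.\ with $v(N)=cN^{-1}$ along a sparse subsequence of the good $N$'s, then invokes Proposition~\ref{ubiquity statement} to obtain \emph{local} ubiquity (a uniform positive-proportion covering of every ball by $\bigcup_{N_{k-1}<j\le N_k}B(\omega_j,\rho(N_k))$), and finally applies the Wang--Wu weighted mass transference principle (Theorem~\ref{MTPRR}). You instead bypass the whole ubiquity/MTPRR machinery: from $D_{N_k}\le C/N_k$ you extract directly that every $x$ (outside the countable set $\{\omega_j\}$) lies in $B(\omega_j,2C/j)$ for infinitely many $j$, giving $\lambda\big(\limsup_j B(\omega_j,2C/j)\big)=1$, and then feed this into the classical Beresnevich--Velani Mass Transference Principle for balls, absorbing the constant $2C$ via the parameter $\delta$.

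Your argument is shorter and self-contained for this one-dimensional unweighted case; the price is that it does not generalise to the weighted higher-dimensional setting of Theorem~\ref{Jarnik Besicovitch uniform}, which is why the paper sets things up via local ubiquity and the rectangular MTP. One cosmetic point: for $x$ near the endpoints of $[0,1]$ the interval of length $>C/N_k$ centred at $x$ may leave $[0,1]$, but replacing it by any subinterval of $[0,1]$ of that length containing $x$ fixes this and only changes the constant in $2C/j$, which is again absorbed by $\delta$.
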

In combination with the Three distance theorem and a result of Khintchine \cite[Satz 1]{Khin26} (that only $\Q$ is singular in one dimension), one can deduce from Theorem \ref{Jarnik Besicovitch uniform2} the following theorem of Bugeaud \cite[Theorem 1]{Bugeaud03}.
\begin{corollary*}[Bugeaud~\cite{Bugeaud03}]
Let $\alpha \in \R\backslash\Q$. Then, for any $\tau>1$,
\begin{equation*}
\dimh W_{(\{n\alpha\})_{n\geq 1}}(\Psi)=\frac{1}{\tau}\, .
\end{equation*}
\end{corollary*}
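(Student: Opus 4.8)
The plan is to obtain the corollary as a direct consequence of Theorem~\ref{Jarnik Besicovitch uniform2}, treated as a black box: the entire content of the argument is the verification that, for every irrational $\alpha$, the sequence $\omega=(\{n\alpha\})_{n\geq 1}$ meets the two hypotheses of that theorem, namely that $\omega$ is uniformly distributed on $[0,1]$ and that its discrepancy satisfies $D_N(\omega)\ll 1/N$ for infinitely many $N$. Granting this, Theorem~\ref{Jarnik Besicovitch uniform2} applied with $\Psi(N)=N^{-\tau}$ gives $\dimh W_{\omega}(\Psi)=1/\tau$, which is exactly the claimed equality because $W_{(\{n\alpha\})_{n\geq 1}}(\Psi)=W_{\omega}(\Psi)$.

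The uniform distribution of $(\{n\alpha\})_{n\geq 1}$ for $\alpha\in\R\setminus\Q$ is the classical theorem of Weyl \cite{KuipersNiederreiter1974}, so the first hypothesis is immediate. For the discrepancy I would work along the denominators $q_k$ of the convergents $p_k/q_k$ of $\alpha$, writing $\eta_k=\|q_k\alpha\|$. By the three distance theorem, the $q_k$ points $\{0\},\{\alpha\},\ldots,\{(q_k-1)\alpha\}$ partition $[0,1)$ into arcs taking only the two lengths $\eta_{k-1}$ and $\eta_{k-1}+\eta_k$, with respective multiplicities $q_k-q_{k-1}$ and $q_{k-1}$. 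The classical identity $q_k\eta_{k-1}+q_{k-1}\eta_k=1$ both confirms that the total length is $1$ and yields $\eta_{k-1}<1/q_k$, whence every arc has length $<2/q_k$. Since the three distance theorem additionally forces these two lengths to alternate in a balanced (Sturmian) pattern, the counting function $\#\{j<q_k:\{j\alpha\}\in[0,t)\}$ deviates from $q_k t$ by a bounded amount independent of $t$ and $k$; this bounds the star-discrepancy by $C/q_k$, and hence (by the quoted relation $D_N\leq 2^nD_N^*$) gives $D_{q_k}(\omega)\leq C'/q_k$. As there are infinitely many convergents, this supplies the required infinite family of scales with $D_N(\omega)\ll 1/N$.

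It remains to see that this family is genuinely infinite for every irrational $\alpha$, and this is where Khintchine's theorem enters. The existence of arbitrarily large scales at which $\min_{1\leq q\leq N}\|q\alpha\|\gg 1/N$ — equivalently, that the good scales $q_k$ above do not dry up — is exactly the statement that $\alpha$ is not singular. By \cite[Satz 1]{Khin26}, in dimension one the only singular points are the rationals, so every irrational $\alpha$ is non-singular, the convergent denominators $q_k$ form an infinite sequence of good scales, and the discrepancy hypothesis of Theorem~\ref{Jarnik Besicovitch uniform2} is met. This also pinpoints why the assumption $\alpha\notin\Q$ cannot be dropped: for rational $\alpha$ the sequence $(\{n\alpha\})$ is finite modulo $1$ and singular, and the construction collapses. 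Feeding $\omega$ into Theorem~\ref{Jarnik Besicovitch uniform2} then completes the proof.

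The main obstacle is the middle step: converting the three distance theorem's description of the gaps into the bound $D_{q_k}(\omega)\ll 1/q_k$. Bounding the largest gap by $2/q_k$ is routine, but the discrepancy is governed by the cumulative fluctuation of the counting function rather than by any single gap, so one must exploit the balanced arrangement of the two gap lengths to show that these fluctuations cancel up to an $O(1/q_k)$ error. This is the only place requiring a genuine computation, and it may alternatively be replaced by an appeal to the sharp one-dimensional discrepancy estimates for $\{n\alpha\}$ at convergent denominators in \cite[Ch.~2]{KuipersNiederreiter1974}.
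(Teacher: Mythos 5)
Your proposal is, in substance, the paper's own argument: the paper deduces the corollary from Theorem~\ref{Jarnik Besicovitch uniform2} exactly by combining the three distance theorem with Khintchine's result from \cite{Khin26}, and it offers no more detail than a single sentence, so you have reconstructed the intended route rather than found a new one, and the verification is sound where it matters. Uniform distribution is Weyl's theorem, and the discrepancy hypothesis does hold along the convergent denominators. The step you flag as the main obstacle closes exactly as you anticipate: the circular word of short/long gaps at $N=q_k$ is balanced (Sturmian/Christoffel of slope $q_{k-1}/q_k$), so if $s$ and $l$ denote the numbers of short and long gaps meeting $[0,t)$, then $|l-(q_{k-1}/q_k)(s+l)|\le 1$; combining this with $t=(s+l)\eta_{k-1}+l\eta_k+O(\eta_{k-1}+\eta_k)$, the identity $q_k\eta_{k-1}+q_{k-1}\eta_k=1$, and $q_k\eta_k<1$ yields $|A([0,t);q_k,\omega)-q_kt|=O(1)$ uniformly in $t$. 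Alternatively, and more simply: since $|\alpha-p_k/q_k|<1/(q_kq_{k+1})$, each point $\{j\alpha\}$ with $0\le j<q_k$ lies within $1/q_{k+1}\le 1/q_k$ of $\{jp_k/q_k\}$, and as $\gcd(p_k,q_k)=1$ these are precisely the lattice points $i/q_k$, $0\le i<q_k$, whose star-discrepancy is $1/q_k$; a perturbation of size $\delta\le 1/q_k$ changes each count over $[0,t)$ by $O(1)$, so $D^*_{q_k}\ll 1/q_k$ with an absolute constant, and $D_{q_k}\le 2D^*_{q_k}$.

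The one incorrect assertion is in your Khintchine paragraph: the claim that the infinitude of the good scales $q_k$ ``is exactly the statement that $\alpha$ is not singular'' is false. The denominators $q_k$ form an infinite sequence for every irrational $\alpha$ for the elementary reason that the continued fraction expansion does not terminate, and nothing in your discrepancy estimate at $N=q_k$ used the non-singularity bound $\min_{1\le q\le N}\|q\alpha\|\gg 1/N$. As written, that step is therefore a non sequitur; deleting the paragraph leaves a complete proof, which shows in passing that the hypotheses of Theorem~\ref{Jarnik Besicovitch uniform2} are met by every irrational $\alpha$ with no genuine input from \cite{Khin26} on your route. (If one instead wanted scales $N$ at which \emph{all} gaps are $\asymp 1/N$, non-singularity would be the natural hypothesis, but at $N=q_k$ even this is automatic, since the smallest gap is $\eta_{k-1}>1/(2q_k)$.) This blemish is harmless to correctness, but you should either remove the paragraph or say plainly that Khintchine's theorem is invoked only to mirror the paper's attribution, not because your argument requires it.
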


Some of our results are not optimal. A refinement of our strategy potentially can improve our theorems. However, when $n\geq 2$, Roth's lower bound on the discrepancy \eqref{Eq:RothDiscrepancyLowerEstimate} and our condition on the $(\mathcal{N},v)$-discrepancy imply that the growth of $\mathcal{N}=(N_j)_{j\geq 1}$ is faster than exponential. As a consequence, if we are after a 0-1 dichotomy, we will have to impose some conditions on the approximation functions. A more promising landscape appears when we replace the discrepancy of the sequence $\omega$ with its dispersion \cite[Section 1.1.3.]{DrmotaTichy1997}.

We clarify some notations that will be used throughout. For real quantities $A,B$ and a parameter $t$, we write $A \ll_t B$ if $A \leq c(t) B$ for a constant $c(t) > 0$ that depends on $t$ only (while $A$ and $B$ may depend on other parameters). We write  $A\asymp_{t} B$ if $A\ll_{t} B\ll_{t} A$. If the constant $c>0$ is absolute, we simply write $A\ll B$ and $A\asymp B$. The \textit{Hausdorff dimension} of $F$ is denoted by $\dimh F$, and the \textit{$s$-dimensional Hausdorff measure} is denoted by $\cH^s$.

\section{Setup and necessary definitions}\label{ubiquity}

Local ubiquity for rectangles, as given in \cite{KW23}, lies at the heart of our proofs.
This definition is a generalisation of ubiquity for rectangles as found in \cite{WW19}. The notion of an ``ubiquitous system'' for balls was introduced by Dodson, Rynne, and Vickers \cite{DRV} which was then generalised to the abstract metric space settings in \cite{BDV06}.\par

The notion of ubiquitous systems can be defined in any finite product of locally compact metric spaces equipped with an Ahlfors-regular probability measure in each coordinate metric space. 
However, we only consider $n$-dimensional Euclidean space and so restrict to the relevant framework
\footnote{ We have adapted the notion of ubiquity for rectangles defined in  \cite[Section 2.2]{KW23} into the context of uniformly distributed sequences on $[0,1]^{n}$. To this end, we chose the parameters as follows: $J = \N$; $\beta_j=j$ for all $j\in\N$; for each $j\in \N$, we consider the resonant sets $\mathfrak{R}_{j,i}=\{x_{j,i}\}$ whenever $1\leq i\leq n$, and $\mathfrak{R}_j=\{\boldsymbol{\omega}_j\}$; for the $\kappa$-scaling property, take $\kappa_i=0$ when $1\leq i \leq n$; the exponent the Ahlfors measure is $\delta_i=1$ when $1\leq i \leq n$.
}. 
For any $x\in [0,1]$ and $r>0$, we denote by $B(x,r)\subseteq [0,1]$ the ball with centre $x$ and radius $r$ with respect to the usual metric.
Let $(l_{k})_{k\geq 1}$, $(u_{k})_{k\geq 1}$ be two sequences in $\R_{+}$ such that $u_{k} \geq l_{k}$ with $l_{k} \to \infty$ as $k \to \infty$. 
Define for each $k\in\N$ the set
\begin{equation*}
J_{k}= \{ m \in \N: l_{k} \leq m \leq u_{k} \}.
\end{equation*}

Let $\rho=(\rho_{1}, \dots , \rho_{n})$ be an $n$-tuple of non-increasing functions $\rho_{i}: \R_{+} \to \R_{+}$ such that each $\rho_{i}(x) \to 0$ as $x\to \infty$. 
For any set $A\subset [0,1]$ and $r \in \R_{+}$, we write
\begin{equation*}
\Delta_{i}(A,r)= \bigcup_{a \in A}B_{i}(a,r).
\end{equation*}
and for any point $\mathbf{x}=(x_1,\ldots, x_d)\in [0,1]^n$ we define
\begin{equation*}
\Delta(\mathbf{x},\rho(r))= \prod_{i=1}^{n}  \Delta_{i}(\{x_i\},\rho_{i}(r)).
\end{equation*}
With this notation, we can rewrite $W_{\omega}(\Psi)$ as
\[
W_{\omega}(\Psi)
=
\limsup_{j\to\infty} \Delta\left( \boldsymbol{\omega}_j , \Psi(\beta_{j}) \right).
\]

\begin{definition}[Locally ubiquitous sequence]\rm
We say that the sequence $\omega$ is \textit{locally ubiquitous with respect to $(l_k)_{k\geq 1}$, $(u_k)_{k\geq 1}$, and $\rho$} if there exists a constant $c>0$ such that for any ball $B \subset [0,1]^n$ and all sufficiently large $k \in \N$ we have
\begin{equation*}
\mu \left( B \cap \bigcup_{j \in J_{k}}\Delta( \boldsymbol{\omega}_j, \rho(u_{k})) \right) \geq c \mu(B).
\end{equation*}
\end{definition}

We provide a version of a result by Kleinbock and Wang \cite{KW23} which is crucial for our result. 
\begin{theorem}[{\cite[Theorem 2.5]{KW23}}] \label{KW ambient measure}
Let $\omega$ be a locally ubiquitous sequence with respect to $(l_k)_{k\geq 1}$, $(u_k)_{k\geq 1}$, and $\rho$.
For each $1\leq i \leq n$, consider a function $\psi_{i}:\N\to\R_{+}$ and write $\Psi=(\psi_1, \ldots,\psi_n)$.
Suppose that
    \begin{enumerate}
        \item[\rm (I)] for each $1\leq i \leq n$,  $\psi_{i}$ is decreasing,
        \item[\rm (II)] for each $1\leq i \leq n$,  $\psi_{i}(r) \leq \rho_{i}(r)$ for all $r\in\R_{+}$ and $\rho_{i}(r)\to 0$ as $r\to \infty$,
        \item[\rm (III)] either $\rho_{i}$ is $c$-regular on $( u_{k})_{k\geq 1}$ for all $1\leq i \leq n$ or $\psi_{i}$ is $c$-regular on $ ( u_{k})_{k\geq 1}$ for all $1\leq i \leq n$ for some $0<c<1$.
    \end{enumerate}
    Then,
    \begin{equation*}
        \lambda_n( W_{\omega} (\Psi))=\lambda_n([0,1]^n) \quad \text{\rm if }\quad  \quad \sum\limits_{k=1}^{\infty}\prod_{i=1}^{n} \frac{\psi_{i}(u_{k})}{\rho_{i}(u_{k})}  \, = \infty.
    \end{equation*}
\end{theorem}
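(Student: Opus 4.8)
The plan is to derive this statement directly from the general rectangular-ubiquity theorem of Kleinbock and Wang, \cite[Theorem 2.5]{KW23}, by specialising their abstract framework to the present concrete setting. First I would fix the dictionary between the two formulations, following the parameter choices recorded in the footnote: take the index set $J=\N$, the ordering function $\beta_j=j$, the coordinate resonant sets to be the singletons $\mathfrak{R}_{j,i}=\{\omega_{j,i}\}$, and the product resonant sets $\mathfrak{R}_j=\{\boldsymbol{\omega}_j\}$. Since each coordinate space is $[0,1]$ equipped with one-dimensional Lebesgue measure, which is an Ahlfors $1$-regular probability measure and hence a legitimate instance of their ambient structure, I would set the Ahlfors exponents $\delta_i=1$; and because the resonant sets are points, the $\kappa$-scaling condition holds trivially with $\kappa_i=0$. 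The ambient measure $\mu$ is then $\lambda_n$.

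Next I would verify that each hypothesis of \cite[Theorem 2.5]{KW23} is precisely the corresponding hypothesis stated here. Local ubiquity as defined above is the literal specialisation of Kleinbock--Wang's local ubiquity once the thickenings $\Delta(\boldsymbol{\omega}_j,\rho(u_k))$ and the blocks $J_k=\{m:l_k\le m\le u_k\}$ are identified with theirs; conditions (I)--(III) match their monotonicity, containment ($\psi_i\le\rho_i$), and regularity requirements verbatim. The one point needing care is the divergence sum: in the general theorem the coordinate contribution is governed by a weighted exponent of $\psi_i(u_k)/\rho_i(u_k)$ built from $\delta_i$ and $\kappa_i$, and I would check that substituting $\delta_i=1,\ \kappa_i=0$ collapses this to the bare product $\prod_{i=1}^n \psi_i(u_k)/\rho_i(u_k)$ appearing here. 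With the dictionary confirmed, the conclusion $\lambda_n(W_\omega(\Psi))=\lambda_n([0,1]^n)$ follows immediately from theirs.

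If instead a self-contained argument were wanted, I would reproduce the ubiquity machinery underlying \cite{KW23}: rewrite $W_\omega(\Psi)$ as the limsup set $\limsup_j \Delta(\boldsymbol{\omega}_j,\Psi(\beta_j))$, and inside an arbitrary ball $B$ build a nested Cantor-type family of rectangles by repeatedly invoking local ubiquity at a rapidly increasing subsequence of levels $k$ to populate each stage with thickened resonant rectangles $\Delta(\boldsymbol{\omega}_j,\rho(u_k))$, then shrinking each to its $\Psi$-neighbourhood. A weighted mass distribution principle applied to the natural measure supported on this Cantor set would give a positive lower bound for $\lambda_n(W_\omega(\Psi)\cap B)$, and a Lebesgue density argument would upgrade positivity in every ball to full measure. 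The divergence of $\sum_k \prod_i \psi_i(u_k)/\rho_i(u_k)$ is exactly what guarantees the construction never terminates and the mass estimates do not degenerate.

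The main obstacle, in either route, is controlling the anisotropy of the rectangles: because the $\psi_i$ (and $\rho_i$) decay at different rates in different coordinates, one cannot treat the neighbourhoods as balls, and the regularity hypothesis (III) together with the contracting condition $\psi_i\le\rho_i$ is precisely what keeps the coordinatewise shrinking ratios under control across consecutive blocks $J_k$. For the direct derivation the real work reduces to confirming that the present definitions are genuine instances of the Kleinbock--Wang ones, so that their theorem applies without modification.
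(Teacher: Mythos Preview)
Your proposal is correct and matches the paper's treatment exactly: the paper does not prove this theorem but states it as a direct citation of \cite[Theorem 2.5]{KW23}, with the footnote recording precisely the parameter dictionary you lay out ($J=\N$, $\beta_j=j$, $\mathfrak{R}_{j,i}=\{\omega_{j,i}\}$, $\kappa_i=0$, $\delta_i=1$). Your verification that these choices collapse the general Kleinbock--Wang divergence sum to $\sum_k\prod_i \psi_i(u_k)/\rho_i(u_k)$ is the only content needed, and it is exactly what the paper leaves implicit.
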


\section{Proofs of the main results}

\subsection{The ubiquity property}
In this subsection, we formulate a key statement that will allow us to construct ubiquitous systems.

\begin{proposition} \label{ubiquity statement}
    Let $\omega$ be a $(\cN,v)$-d.s.s. . Let $\rho=(\rho_{1},\dots,\rho_{n})$ be an $n$-tuple of functions $\rho_{i}:\cN \to \R_{+}$ such that
    \begin{enumerate}
        \item[i)] each $\rho_{i}(N)\to 0$ as $N\to \infty$ ,
        \item[ii)] $\prod_{i=1}^{n}\rho_{i}(N)=v(N)$ for all $N\in\cN$ .
    \end{enumerate}
    Then there exists constant $c>0$ such that, for any ball $B\subset [0,1]^{n}$, there exists sufficiently large $k_{0}\in\N$ such that for all $k\geq k_{0}$
    \begin{equation*}
        \lambda_n\left( B\cap \bigcup_{N_{k-1}<j\leq N_{k}}\Delta\left(\boldsymbol{\omega}_{j},\rho(N_{k})\right)\right) \geq c\lambda_n(B).
    \end{equation*}
\end{proposition}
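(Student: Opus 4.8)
The plan is to prove this local ubiquity estimate by a covering-and-counting argument: I would tile the ball with cells whose side lengths are precisely $\rho_1(N_k),\dots,\rho_n(N_k)$, and then count how many cells capture a \emph{new} point $\boldsymbol{\omega}_j$ with $N_{k-1}<j\le N_k$, feeding in the discrepancy bounds at the scales $N_{k-1},N_k\in\cN$. First I would reduce from balls to boxes: given a ball $B\subseteq[0,1]^n$, inscribe an axis-parallel box $R\subseteq B$ with $\lambda_n(R)\geq \kappa_n\lambda_n(B)$ for a purely dimensional constant $\kappa_n>0$. Since $R\subseteq B$, it suffices to cover a fixed proportion of $R$, and the discrepancy, being defined through rectangles, is adapted to $R$.

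For the covering step, partition $R$ into a grid $\mathcal G_k$ of cells of dimensions $\rho_1(N_k)\times\cdots\times\rho_n(N_k)$; by hypothesis (ii) each cell $Q\in\mathcal G_k$ has volume $\prod_{i=1}^n\rho_i(N_k)=v(N_k)$. The geometric observation driving the argument is that if a cell $Q$ contains some $\boldsymbol{\omega}_j$, then $Q\subseteq\Delta(\boldsymbol{\omega}_j,\rho(N_k))$: the set $\Delta(\boldsymbol{\omega}_j,\rho(N_k))$ is the product of the intervals of radius $\rho_i(N_k)$ about $\omega_{j,i}$, whose length $2\rho_i(N_k)$ dominates the side $\rho_i(N_k)$ of $Q$, so every point of $Q$ lies within $\rho_i(N_k)$ of $\omega_{j,i}$ in each coordinate. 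Hence every cell capturing a new point is contained in $\bigcup_{N_{k-1}<j\le N_k}\Delta(\boldsymbol{\omega}_j,\rho(N_k))$, and, counting only cells lying fully inside $R$ (the boundary cells contribute negligibly once $k$ is large enough that $v(N_k)$ is small), the measure in question is at least $v(N_k)$ times the number of cells that capture a new point.

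To bound that number from below I would combine two applications of $|A(R';N)/N-\lambda_n(R')|\le D_N$ at indices in $\cN$. A lower bound on the number $P_k$ of new points in $R$ comes from $D_{N_k}<v(N_k)$, which gives $A(R;N_k)>N_k(\lambda_n(R)-v(N_k))$; discarding the at most $N_{k-1}$ earlier points yields $P_k>N_k\lambda_n(R)-N_kv(N_k)-N_{k-1}$. An upper bound on the occupancy of a single cell comes from the same inequality: since $\lambda_n(Q)=v(N_k)$ and $D_{N_k}<v(N_k)$, each cell holds at most $A(Q;N_k)<2N_kv(N_k)$ of the first $N_k$ points. Dividing, at least $P_k/(2N_kv(N_k))$ cells capture a new point, so the covered measure is at least
\[
\frac{P_k}{2N_k}\;>\;\frac12\Bigl(\lambda_n(R)-v(N_k)-\frac{N_{k-1}}{N_k}\Bigr).
\]

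The step I expect to be the main obstacle is controlling the contribution of the old points, i.e.\ showing $N_{k-1}/N_k\to0$, and this is exactly where both defining properties of a $(\cN,v)$-d.s.s.\ are used. Writing $N_{k-1}/N_k=(N_{k-1}v(N_k))/(N_kv(N_k))$, the numerator is bounded by the hypothesis $\limsup_i N_{i-1}v(N_i)<1$, while the denominator obeys $N_kv(N_k)>N_kD_{N_k}$; for $n\ge2$ Roth's lower bound \eqref{Eq:RothDiscrepancyLowerEstimate} forces $N_kD_{N_k}\gg_n(\log N_k)^{(n-1)/2}\to\infty$, whence $N_{k-1}/N_k\to0$. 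With this in hand, $v(N_k)\to0$ gives a covered measure at least $\tfrac14\lambda_n(R)\geq\tfrac14\kappa_n\lambda_n(B)$ for all large $k$, so $c=\tfrac14\kappa_n$ works. I anticipate the delicate point to be precisely this: a clean estimate on $N_{k-1}/N_k$ requires feeding a genuine \emph{lower} bound for the discrepancy back into the hypotheses, and the case $n=1$, where Roth's estimate is vacuous, must instead be handled through the specific decay of $v$ along $\cN$.
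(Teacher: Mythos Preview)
Your counting argument has a genuine gap at the ``divide by maximal occupancy'' step. After bounding the number $P_k$ of new points in $R$ from below and the occupancy of each cell by $2N_kv(N_k)$ from above, you obtain a covered measure of at least $P_k/(2N_k)>\tfrac12\bigl(\lambda_n(R)-v(N_k)-N_{k-1}/N_k\bigr)$. Since the proposition must hold for arbitrarily small balls $B$ (hence arbitrarily small $\lambda_n(R)$) with a \emph{uniform} constant $c$, this forces $N_{k-1}/N_k\to0$. But the $(\cN,v)$-d.s.s.\ hypothesis controls only $N_{k-1}v(N_k)$, not $N_{k-1}/N_k$, and your attempt to pass from one to the other via Roth's lower bound \eqref{Eq:RothDiscrepancyLowerEstimate} is vacuous when $n=1$: there one only has the trivial $N_kD_{N_k}\geq\tfrac12$, so $N_kv(N_k)>N_kD_{N_k}$ is merely bounded below, and $N_{k-1}/N_k=N_{k-1}v(N_k)/(N_kv(N_k))$ need not tend to $0$. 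You flag this yourself, but the $n=1$ case is not a technicality --- the proposition is invoked there (Theorem~\ref{Jarnik Besicovitch uniform2}) --- and the hypotheses do not supply the ``specific decay of $v$ along $\cN$'' you appeal to.

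The paper sidesteps the whole issue by estimating measures rather than counting points. It shows first that $\bigcup_{j\le N_k}\Delta(\boldsymbol{\omega}_j,\rho(N_k))$ covers all but $\varepsilon$ of $B$ (essentially the observation that each of your cells catches a point, since $D_{N_k}<v(N_k)=\lambda_n(Q)$), and then that $B\cap\bigcup_{j\le N_{k-1}}\Delta(\boldsymbol{\omega}_j,\rho(N_k))$ has measure at most a constant times $N_{k-1}v(N_k)\,\lambda_n(B)$, because only about $N_{k-1}\lambda_n(B)$ of the old points lie near $B$ and each contributes a rectangle of volume $\asymp v(N_k)$. Subtracting, the hypothesis $\limsup N_{k-1}v(N_k)<1$ gives the result directly. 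Your cell framework can be repaired in the same spirit: since every cell contains at least one of the first $N_k$ points, any cell \emph{without} a new point must contain an old one; as the cells are disjoint, there are at most $A(R;N_{k-1})\le(1+\varepsilon)N_{k-1}\lambda_n(R)$ such cells, so the covered measure is at least $\bigl((1-\varepsilon)-(1+\varepsilon)N_{k-1}v(N_k)\bigr)\lambda_n(R)$, with no recourse to $N_{k-1}/N_k$ or to Roth.
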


\begin{remark}
Proposition \ref{ubiquity statement} provides conditions ensuring that any sequence $\omega$ which is $(\cN,v)$-d.s.s. is locally ubiquitous with respect to $(l_j)_{j\geq 1}=(N_{j-1})_{j \geq 1}$ with $N_0=0$, $(u_j)_{j\geq 1}=\cN$, and $\rho$. In this case, we say that $\omega$ is locally ubiquitous with respect to $\cN$ and $\rho$.
\end{remark}

In order to show Proposition~\ref{ubiquity statement}, we prove several preliminary statements. The first of which is a straightforward statement on the covering of a rectangle by smaller disjoint rectangles. The proof is left as an exercise. 
\begin{proposition}\label{Prop:AlmostCover}
Let $R\subseteq \mathbb{R}^n$ be a rectangle and $(\mathbf{r}_j)_{j\geq 1}$ a sequence in $\mathbb{R}_{+}^n$ converging to $(0,\ldots,0)$. For every $\varepsilon>0$ there exists $K=K(\varepsilon)\in\N$ such that for all $k\in \N_{\geq K}$ there is a finite set $\mathcal{C}=\mathcal{C}(k)\subseteq R$ verifying
\begin{enumerate}[i.]
\item If $\mathbf{y},\mathbf{w}\in\mathcal{C}$ are different, then $\Delta(\mathbf{y};\mathbf{r}_k)\cap\Delta(\mathbf{w};\mathbf{r}_k)=\varnothing$.
\item For each $\mathbf{y}\in\mathcal{C}$, the rectangle $\Delta(\mathbf{y},\mathbf{r}_k)$ is contained in the interior of $R$.
\item We have
\[
\lambda_n\left( R\setminus \bigcup_{\mathbf{y}\in\mathcal{C}} \Delta(\mathbf{y};\mathbf{r}_{k})\right)< \varepsilon.
\]
\end{enumerate}
\end{proposition}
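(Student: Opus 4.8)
The plan is to exploit the product structure $\Delta(\mathbf{y};\mathbf{r}_k)=\prod_{i=1}^n B_i(y_i,r_{k,i})$ and reduce the whole statement to a one-dimensional interval-packing count carried out in each coordinate separately. Write $R=\prod_{i=1}^n[a_i,b_i)$ and $\mathbf{r}_k=(r_{k,1},\ldots,r_{k,n})$. Since $\mathbf{r}_k\to(0,\ldots,0)$, for all sufficiently large $k$ we have $2r_{k,i}<b_i-a_i$ for every $i$, and I would work only with such $k$.

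In each coordinate $i$ I would lay down a regular grid of centres
\[
y_i^{(\ell)}=a_i+(2\ell-1)r_{k,i},\qquad 1\le \ell\le m_i:=\left\lfloor \frac{b_i-a_i}{2r_{k,i}}\right\rfloor,
\]
so that the intervals $B_i(y_i^{(\ell)},r_{k,i})=\bigl(a_i+(2\ell-2)r_{k,i},\,a_i+2\ell r_{k,i}\bigr)$ are pairwise disjoint and together fill $(a_i,a_i+2m_ir_{k,i})\subseteq(a_i,b_i)$ up to a finite (hence null) set. Taking $\mathcal{C}=\mathcal{C}(k)$ to be the product grid $\{(y_1^{(\ell_1)},\ldots,y_n^{(\ell_n)}):1\le \ell_i\le m_i\}$, properties (i) and (ii) are then immediate: disjointness holds because it holds coordinatewise, each centre lies in $(a_i,b_i)$ so $\mathcal{C}\subseteq R$, and every open box $\Delta(\mathbf{y};\mathbf{r}_k)$ sits inside $\prod_i(a_i,b_i)=\operatorname{int}(R)$ since each factor interval has endpoints in $[a_i,b_i]$ and the boxes are open.

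For (iii) I would compute the covered volume exactly. Using the identity $\bigcup_{\mathbf{y}\in\mathcal{C}}\Delta(\mathbf{y};\mathbf{r}_k)=\prod_{i=1}^n(a_i,a_i+2m_ir_{k,i})$, this set has measure $\prod_{i=1}^n 2m_ir_{k,i}$. From $m_i>\tfrac{b_i-a_i}{2r_{k,i}}-1$ one gets $2m_ir_{k,i}\ge (b_i-a_i)-2r_{k,i}$, whence
\[
\frac{\lambda_n\bigl(\bigcup_{\mathbf{y}\in\mathcal{C}}\Delta(\mathbf{y};\mathbf{r}_k)\bigr)}{\lambda_n(R)}
\ge \prod_{i=1}^n\left(1-\frac{2r_{k,i}}{b_i-a_i}\right)\xrightarrow[k\to\infty]{}1,
\]
so the uncovered measure $\lambda_n\bigl(R\setminus\bigcup_{\mathbf{y}\in\mathcal{C}}\Delta(\mathbf{y};\mathbf{r}_k)\bigr)$ tends to $0$. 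Choosing $K=K(\varepsilon)$ so large that this quantity drops below $\varepsilon$ for all $k\ge K$ completes the argument.

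There is no genuine obstacle here; the content is entirely bookkeeping, and the only point requiring a little care is reconciling (ii) with (iii). Shrinking the packing so as to guarantee interior containment loses a sliver of measure near each face of $R$, but this loss is precisely the $O(r_{k,i})$ deficit in each coordinate, which is harmless in the finite product as $k\to\infty$; because the boxes are open, the extreme grid rectangles may touch the faces of $R$ without leaving $\operatorname{int}(R)$, so no additional inward shift is actually needed. If one insists on a strict safety margin, one can instead start the grid at $a_i+r_{k,i}+\delta_k$ with $\delta_k\to 0$, which alters each count $m_i$ by at most one and does not affect the limiting coverage.
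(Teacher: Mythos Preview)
Your argument is correct: the coordinate-wise grid construction gives disjoint boxes inside $\operatorname{int}(R)$ whose union misses at most a set of measure $\lambda_n(R)\bigl(1-\prod_i(1-2r_{k,i}/(b_i-a_i))\bigr)\to 0$, and your remark about a safety margin handles the case where the balls $B(x,r)$ are taken to be closed. One small point: the identity $\bigcup_{\mathbf{y}\in\mathcal{C}}\Delta(\mathbf{y};\mathbf{r}_k)=\prod_i(a_i,a_i+2m_ir_{k,i})$ is only an equality up to a null set (the grid hyperplanes are missing), but you acknowledge this earlier and it does not affect the measure estimate.

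There is nothing to compare against: the paper states this proposition and explicitly leaves the proof as an exercise, so your write-up simply fills that gap.
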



The following result shows that any ball $B\subseteq [0,1]^{n}$ can be covered, up to a Lebesgue-null set, by rectangles centred on a finite subset of the sequence $\omega$.

\begin{lemma}\label{uniform dirichlet}
Let $\rho$ and $\omega$ satisfy the conditions stated in Proposition~\ref{ubiquity statement}. For any ball $B\subseteq [0,1]^{n}$ we have
\[
\lim_{k\to\infty}
\lambda_n\left( B\cap\bigcup_{j=1}^{N_k} \Delta(\boldsymbol{\omega}_j;\rho(N_k))\right)
=
\lambda_n(B).
\]
\end{lemma}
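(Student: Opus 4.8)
The plan is to reduce the statement to a Dirichlet‑type pigeonhole fact: for every large $k$ the finite union $\bigcup_{j=1}^{N_k}\Delta(\boldsymbol{\omega}_j;\rho(N_k))$ already covers all of $[0,1]^n$ up to a $\lambda_n$‑null set. Once this is shown the limit is immediate, since $\lambda_n\!\big(B\cap\bigcup_{j=1}^{N_k}\Delta(\boldsymbol{\omega}_j;\rho(N_k))\big)\leq\lambda_n(B)$ holds trivially for every $k$. The engine producing a nearby sequence point around each $\mathbf{x}$ is the discrepancy bound $D_{N_k}(\omega)<v(N_k)$ coupled with the normalisation $\prod_{i=1}^n\rho_i(N_k)=v(N_k)$. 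Note that here I use \emph{all} indices $j\leq N_k$, so the sparsity hypothesis $\limsup_i N_{i-1}v(N_i)<1$ plays no role; it is needed only later in Proposition~\ref{ubiquity statement} to discard the first $N_{k-1}$ points.

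Concretely, fix $k$ large enough that $\rho_i(N_k)<\tfrac12$ for every $i$ (possible since each $\rho_i(N)\to0$), and fix $\mathbf{x}\in[0,1]^n$. I would consider the half-open, axis-parallel box
\[
R_{\mathbf{x}}=\prod_{i=1}^n\Big[\max\{0,x_i-\rho_i(N_k)\},\ \min\{1,x_i+\rho_i(N_k)\}\Big),
\]
which is an admissible rectangle for the discrepancy (each side is nondegenerate). A short case analysis on whether $x_i$ lies within $\rho_i(N_k)$ of an endpoint of $[0,1]$ shows that each side of $R_{\mathbf{x}}$ has length at least $\rho_i(N_k)$, so $\lambda_n(R_{\mathbf{x}})\geq\prod_{i=1}^n\rho_i(N_k)=v(N_k)$. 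Since $\omega$ is $(\cN,v)$-d.s.s., the definition of discrepancy gives
\[
\frac{A(R_{\mathbf{x}};N_k,\omega)}{N_k}\geq\lambda_n(R_{\mathbf{x}})-D_{N_k}(\omega)>v(N_k)-v(N_k)=0.
\]
As $A(R_{\mathbf{x}};N_k,\omega)$ is a non-negative integer, this forces it to be at least $1$; hence some $\boldsymbol{\omega}_j$ with $j\leq N_k$ lies in $R_{\mathbf{x}}$, and by construction $|x_i-\omega_{j,i}|\leq\rho_i(N_k)$ for every $i$.

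The one subtlety I expect to be the main (and only) obstacle is that membership $\boldsymbol{\omega}_j\in R_{\mathbf{x}}$ delivers $|x_i-\omega_{j,i}|\leq\rho_i(N_k)$, i.e.\ $\mathbf{x}$ in the \emph{closed} box $\overline{\Delta}(\boldsymbol{\omega}_j;\rho(N_k))$, whereas $\Delta$ is built from open balls; the extremal case $\omega_{j,i}=x_i-\rho_i(N_k)$ on a lower face cannot be excluded. I would resolve this by a null-set argument rather than by trying to cover every point strictly. The previous step shows $[0,1]^n=\bigcup_{j=1}^{N_k}\overline{\Delta}(\boldsymbol{\omega}_j;\rho(N_k))$, and
\[
\bigcup_{j=1}^{N_k}\overline{\Delta}(\boldsymbol{\omega}_j;\rho(N_k))\ \setminus\ \bigcup_{j=1}^{N_k}\Delta(\boldsymbol{\omega}_j;\rho(N_k))\ \subseteq\ \bigcup_{j=1}^{N_k}\partial\,\Delta(\boldsymbol{\omega}_j;\rho(N_k))
\]
is a finite union of box boundaries, hence $\lambda_n$-null. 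Consequently $\lambda_n\big([0,1]^n\setminus\bigcup_{j=1}^{N_k}\Delta(\boldsymbol{\omega}_j;\rho(N_k))\big)=0$ for all large $k$, so in fact $\lambda_n\big(B\cap\bigcup_{j=1}^{N_k}\Delta(\boldsymbol{\omega}_j;\rho(N_k))\big)=\lambda_n(B)$ for all large $k$, and the asserted limit follows a fortiori.
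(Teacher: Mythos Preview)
Your argument is correct. The core engine is the same as the paper's: a box of Lebesgue measure at least $v(N_k)$ must contain some $\boldsymbol{\omega}_j$ with $j\leq N_k$, by the strict discrepancy bound $D_{N_k}(\omega)<v(N_k)$ together with $\prod_i\rho_i(N_k)=v(N_k)$.

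The organisation differs, however. The paper invokes an auxiliary covering lemma (Proposition~\ref{Prop:AlmostCover}) to tile $B$ up to measure $\varepsilon$ by disjoint rectangles $\Delta(\mathbf{y};\tfrac12\rho(N_k))$ of volume $v(N_k)$, finds a sequence point in each, and then lets $\varepsilon\to 0$. You instead work pointwise: for every $\mathbf{x}\in[0,1]^n$ you produce an admissible half-open box $R_{\mathbf{x}}$ of volume $\geq v(N_k)$ and extract a sequence point directly, handling the open/closed mismatch by discarding the finite union of box boundaries. Your route avoids the auxiliary proposition entirely and in fact yields a marginally stronger conclusion---that $\bigcup_{j\leq N_k}\Delta(\boldsymbol{\omega}_j;\rho(N_k))$ covers all of $[0,1]^n$ up to a null set for every sufficiently large $k$, not merely in the limit. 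The paper's grid approach, on the other hand, sidesteps the boundary nuisance automatically (each half-box $\Delta(\mathbf{y};\tfrac12\rho(N_k))$ sits strictly inside $\Delta(\omega(\mathbf{y});\rho(N_k))$), at the cost of the extra $\varepsilon$ and the covering lemma.
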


\begin{proof}
Take any $\varepsilon\in (0,1)$. Let $K$ be the natural number obtained from Proposition \ref{Prop:AlmostCover} applied on $R=B$ and $\mathbf{r}_k =\frac{1}{2} \rho(N_k)$. Take $k\in\N_{\geq K}$ and let $\mathcal{C}=\mathcal{C}(k)$ be the corresponding finite set. For every $\mathbf{y}\in \mathcal{C}$, we have 
\[
\left| 
\frac{A_{N_k}\left(\omega;\Delta\left(\mathbf{y}; \frac{\rho(N_k)}{2}\right) \right)}{N_k} -
 \lambda_n\left(\Delta\left(\mathbf{y}; \frac{\rho(N_k)}{2} \right)\right)
\right|
\leq 
D_{N_k} < v(N_k).
\]
Hence, since
\[
 \lambda_n\left(\Delta\left(\mathbf{y}; \frac{\rho(N_k)}{2} \right)\right) = v(N_k),
\]
we have
\[
A_{N_k}\left(\omega;\Delta\left(\mathbf{y}; \frac{\rho(N_k)}{2}\right) \right)\geq 1\, .
\]
So, for each $\mathbf{y}\in \mathcal{C}$, there exists 
\[
\omega(\mathbf{y})
\in 
\{\boldsymbol{\omega}_1,\ldots, \boldsymbol{\omega}_{N_k}\}\cap \Delta\left(\mathbf{y}; \frac{\rho(N_k)}{2} \right)\, .
\]
Fix one such point $\omega(\mathbf{y})$ for each $\mathbf{y}\in\mathcal{C}$. Note that 
\[
\Delta\left(\mathbf{y}; \frac{\rho(N_k)}{2} \right)
\subseteq
\Delta\left(\omega(\mathbf{y}); \rho(N_k) \right).
\]
Therefore, 
\begin{align*}
\lambda_n\left( B\cap\bigcup_{j=1}^{N_k} \Delta(\boldsymbol{\omega}_j;\rho(N_k))\right)
&\geq
\lambda_n\left( B\cap\bigcup_{\mathbf{y}\in \mathcal{C}} \Delta(x(\mathbf{y}) ;\rho(N_k))\right) \\
&\geq
\lambda_n\left( B\cap\bigcup_{\mathbf{y}\in \mathcal{C}} \Delta\left(\mathbf{y} ;\frac{\rho(N_k)}{2} \right)\right) \\
&>
(1-\varepsilon) \lambda_n(B).
\end{align*}
Since $\varepsilon>0$ was arbitrary, we conclude
\[
\liminf_{k\to\infty} \lambda_n\left( B\cap\bigcup_{j=1}^{N_k} \Delta(\boldsymbol{\omega}_j;\rho(N_k))\right)
\geq 
\lambda_n(B).
\]
The result follows, because we have for every $k\in\mathbb{N}$ that
\[
\lambda_n\left( B\cap\bigcup_{j=1}^{N_k} \Delta(\boldsymbol{\omega}_j;\rho(N_k))\right)
\leq 
\lambda_n(B).
\]
\end{proof}

\begin{lemma}\label{standard cover}
For every $\delta,\eta>0$ there exists $K_0=K_0(\delta)\in\N$ such that for all $k\in \mathbb{N}_{\geq K_0}$ we have
\[
\lambda_n\left( B\cap \bigcup_{j=1}^{N_{k-1}} \Delta(\boldsymbol{\omega}_j; \rho(N_{k}))\right)
<
(1+\delta) (1+\eta)^n N_{k-1}v(N_k)\lambda_n(B).
\]
\end{lemma}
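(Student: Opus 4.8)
The plan is to bound the measure of the union by plain subadditivity and then to count, via the discrepancy hypothesis, how many of the first $N_{k-1}$ points of $\omega$ can actually contribute. Since each rectangle $\Delta(\boldsymbol{\omega}_j;\rho(N_k))$ has Lebesgue measure $v(N_k)$ by condition (ii) of Proposition~\ref{ubiquity statement}, and a rectangle can meet $B$ only when its centre $\boldsymbol{\omega}_j$ lies in the enlarged set
\[
B^{+}=B^{+}(k):=\bigl\{\mathbf{x}\in[0,1]^n:\ \mathrm{dist}_i(\mathbf{x},B)\leq \rho_i(N_k)\ (1\leq i\leq n)\bigr\},
\]
obtained by fattening $B$ by $\rho_i(N_k)$ in the $i$-th coordinate, subadditivity gives
\[
\lambda_n\!\left(B\cap\bigcup_{j=1}^{N_{k-1}}\Delta(\boldsymbol{\omega}_j;\rho(N_k))\right)
\leq
\#\{\,j\leq N_{k-1}:\boldsymbol{\omega}_j\in B^{+}\,\}\cdot v(N_k)
=
A(B^{+};N_{k-1},\omega)\,v(N_k).
\]
Thus the whole statement reduces to an upper bound on the counting function $A(B^{+};N_{k-1},\omega)$.

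For the count I would use that $N_{k-1}\in\cN$, so the $(\cN,v)$-d.s.s. property gives $D_{N_{k-1}}(\omega)<v(N_{k-1})$. Applying the definition of discrepancy to the rectangle $B^{+}$ yields
\[
A(B^{+};N_{k-1},\omega)\leq N_{k-1}\bigl(\lambda_n(B^{+})+D_{N_{k-1}}(\omega)\bigr)<N_{k-1}\bigl(\lambda_n(B^{+})+v(N_{k-1})\bigr).
\]
Next I would control the two error contributions separately. For the geometric one, taking $B$ to be a cube of half-side $r$ (a ball being enclosed in its bounding cube), the fattened set satisfies $\lambda_n(B^{+})=\prod_{i=1}^n\bigl(1+\rho_i(N_k)/r\bigr)\lambda_n(B)$, so since each $\rho_i(N_k)\to0$ as $k\to\infty$, for all sufficiently large $k$ we have $\lambda_n(B^{+})\leq(1+\eta)^n\lambda_n(B)$. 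For the arithmetic one, since $v(N_{k-1})\to0$, for all large $k$ we have $v(N_{k-1})\leq\delta(1+\eta)^n\lambda_n(B)$.

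Combining the three displays, for all $k$ beyond a threshold $K_0$ (which in fact also depends on $\eta$ and on $B$ through $r$),
\[
\lambda_n\!\left(B\cap\bigcup_{j=1}^{N_{k-1}}\Delta(\boldsymbol{\omega}_j;\rho(N_k))\right)
<N_{k-1}v(N_k)\bigl((1+\eta)^n\lambda_n(B)+\delta(1+\eta)^n\lambda_n(B)\bigr)
=(1+\delta)(1+\eta)^nN_{k-1}v(N_k)\lambda_n(B),
\]
as required. The one delicate point is keeping the leading constant genuinely of the form $(1+\eta)^n$: this forces one to charge each contributing point exactly the measure $v(N_k)$ (so the subadditive count must not overcount) while applying the discrepancy estimate at the coarser level $N_{k-1}$, and to absorb the fattening of $B$—which is what produces the factor $(1+\eta)^n$, tending to $1$ as $k\to\infty$ because $\rho_i(N_k)\to0$—rather than a cruder neighbourhood estimate. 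Everything else, namely subadditivity together with the two limits $v(N_{k-1})\to0$ and $\rho_i(N_k)\to0$, is routine. This quantitative upper bound is exactly what is subtracted from the near-full lower bound of Lemma~\ref{uniform dirichlet} in the proof of Proposition~\ref{ubiquity statement}, the condition $\limsup_i N_{i-1}v(N_i)<1$ then guaranteeing that the resulting ubiquity constant $c$ is positive.
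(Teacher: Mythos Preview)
Your proof is correct and follows essentially the same route as the paper: bound the union by subadditivity, observe that only those $\boldsymbol{\omega}_j$ lying in a $\rho(N_k)$-fattening of $B$ can contribute, count these via the discrepancy at level $N_{k-1}$, and absorb the two small errors into the factors $(1+\eta)^n$ and $(1+\delta)$. The only cosmetic difference is that the paper invokes $D_{N_{k-1}}\to 0$ directly (choosing $K_0$ so that $D_{N_{k-1}}<\tfrac{\delta}{2}\lambda_n(B)$), while you route through the sharper d.s.s.\ bound $D_{N_{k-1}}<v(N_{k-1})$ and then use $v(N_{k-1})\to 0$; the effect is identical.
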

\begin{proof}
Let $K\in \N$ be such that for all $k\in\mathbb{N}_{\geq K}$
\[
\begin{cases}
\lambda_n\left( \Delta(B;(1+\eta)\rho(N_k)\right)
< 
\left(1+\frac{\delta}{2}\right) \lambda_n(B), \\
D_{N_{k-1}}
< 
\frac{\delta}{2}
\lambda_n(B).
\end{cases}
\]
Such $K$ exists, because $\rho(N_k)\to 0$ as $k\to\infty$ and $\omega$ is uniformly distributed (and so $D_{N_{k}}\to 0$). Then, for any $k\in\N_{\geq K}$ we have
\[
\left| 
\frac{A_{N_{k-1}}(\omega;\Delta(B;(1+\eta)\rho(N_k)))}{N_{k-1}}
-
\lambda_n\left( \Delta(B;(1+\eta)\rho(N_k) ) \right)
\right|
< \frac{\delta}{2}\lambda_n(B),
\]
so 
\[
A_{N_{k-1}}(\omega;\Delta(B;(1+\eta)\rho(N_k))  
\leq 
\left(1 + \frac{\delta}{2} + \frac{\delta}{2} \right)N_{k-1} \lambda_n(B)
=
(1+\delta)N_{k-1} \lambda_n(B).
\]
Note that if $\Delta(\boldsymbol{\omega}_j; \rho(N_{k})) \cap B\neq \varnothing$, then $\boldsymbol{\omega}_j$ is contained in $\Delta(B;(1+\eta)\rho(N_k))$ and so
\begin{align*}
\lambda_n\left( B\cap \bigcup_{j=1}^{N_{k-1}} \Delta(\boldsymbol{\omega}_j; \rho(N_{k}))\right)
&\leq 
A_{N_{k-1}}\left(\omega ;\Delta(B;(1+\eta)\rho(N_k))\right)\lambda_n\left( \Delta(\boldsymbol{\omega}_{1};\rho(N_k) )\right) \\
&<
(1+\delta) \lambda_n(B) (1+\eta)^n N_{k-1}v(N_k),
\end{align*}
in the last inequality, we have used $\prod_{i=1}^{n}\rho_{i}(N)=v(N)$.
\end{proof}

\begin{proof}[Proof of Proposition~\ref{ubiquity statement}]
Since $\limsup_{k\to\infty} N_{k-1}v(N_{k})<1$, we can pick $k_{0}$ large enough so that $N_{k-1}v(N_{k})<1-\gamma$ for all $k\geq k_{0}$. Now choose small values $c,\varepsilon,\delta,\eta>0$ such that
\begin{equation*}
    1-\gamma < \frac{1-\varepsilon-c}{(1+\delta)(1+\eta)^{n}}\, .
\end{equation*}
 Combining Lemma~\ref{uniform dirichlet} and Lemma~\ref{standard cover} with $\varepsilon, \delta,\eta>0$ chosen as above, for every large $k$ we have that
\begin{align*}
    \lambda_n\left( B \cap \bigcup_{N_{k-1}\leq j \leq N_{k}}\Delta\left( \boldsymbol{\omega}_{j},\rho(N_{k})\right)\right) &\geq \lambda_n\left( B \cap \bigcup_{1\leq j \leq N_{k}}\Delta\left( \boldsymbol{\omega}_{j},\rho(N_{k})\right)\right)-\lambda_n\left( B \cap \bigcup_{1\leq j \leq N_{k-1}}\Delta\left( \boldsymbol{\omega}_{j},\rho(N_{k})\right)\right) \\
    &\geq (1-\varepsilon)\lambda_n(B)-(1+\delta)(1+\eta)^{n}N_{k-1}V(N_{k})\lambda_n(B)\\
    &\geq c\lambda_n(B),
\end{align*}
as required.
\end{proof}
\subsection{Proof of Theorem \ref{uniform khintchine}}
For each $1\leq i \leq n$, define $\rho_i:\N\to\R_{+}$ by $\rho_i(k)=v(k)^{\tau_i}$ for $k\in\N$.
By Proposition \ref{ubiquity statement}, $\omega$ is ubiquitous with respect to $\cN$ and $\rho$. 
By the conditions of Theorem~\ref{uniform khintchine} either $\Psi$ is $c$-regular or $v$ is $c$-regular (which implies that $\rho$ is $\max_{i} c^{\tau_{i}}$-regular).
Thus Theorem \ref{KW ambient measure} is applicable, and so we conclude
\begin{equation*}
    \lambda_n(W_{\omega}(\Psi))=1 \quad \text{\rm if }\quad \sum_{j=1}^{\infty} v(N_{j})^{-1}\prod_{i=1}^{n}\psi_{i}(N_{j}) = \infty.
\end{equation*}

\subsection{Proof of Theorem \ref{TEO:UDM1S:Measure:01}}

Let $\omega$ be a sequence in $[0,1]^n$ satisfying \eqref{EQ:Kiefer}. Take $\varepsilon>0$ and define Let $v:\N\to\R_{+}$ by
\[
v(N)
:=
(1+\varepsilon)
\sqrt{\frac{\log \log N}{2N}}
\quad (N\in\N).
\]
Let $\mathcal{N}=(N_j)_{j\geq 1}$ be given by $N_j=M^{3^j}$, $j\in\N$.
Hence,
\[
v\left( M^{3^j}\right) 
=
(1+\varepsilon)
\frac{j^{1/2}}{M^{3^j/2}}  \sqrt{\frac{\log 3 +\frac{\log\log M}{j}}{ 2 } }
\quad (j\in\N),
\]
so
\[
N_{j-1}v(N_j)
=
M^{3^{j-1}}v\left(M^{3^j}\right) 
\asymp_M 
M^{3^{j-1}} \frac{j^{1/2}}{M^{3^j/2}} 
= 
\frac{j^{1/2}}{(M^{1/6})^{3^j}}  \to 0 \;\text{ as }\; j\to\infty
\]
Moreover, by \eqref{EQ:Kiefer}, $D_{N_j}(\omega)< v(N_j)$ for large $j$.
This shows that $\omega$ is a $(\mathcal{N}, v)$-d.s.s..

For each $i\in\{1,\ldots, n\}$, define the function $\rho_i:\N\to\R$ by
\[
\rho_i(N) = v(N)^{\tau_i}
\quad (N\in\N).
\]
Then, in view of 
\[
\frac{v\left( M^{3^{j+1}}\right) }{v\left( M^{3^{j}}\right)} 
\asymp_M   
\frac{1}{M^{3^j}} \to 0 \quad\text{ as }\quad j \to\infty,
\]
we conclude the $c$-regularity of $\rho$ for any $0<c<1$ with respect to $\cN$. 
We write $l_j=M^{3^{j-1}}$, $j\in\N$, and obtain the result from Theorem \ref{KW ambient measure}.

\subsection{Proof of Theorem \ref{TEO:UDM1S:Measure:02}}
Theorem \ref{TEO:UDM1S:Measure:02} is shown as Theorem \ref{TEO:UDM1S:Measure:01}. In this case, however, we work with the function $\tilde{v}:\N\to\R$ given by
\[
\tilde{v}(N)
=
\frac{(\log N)^n}{N} 
\quad (N\in\N)
\]
and the sequences $l_j = M^{(j-1)^2}$, $N_j=u_j = M^{j^2}$ for all $j\in\N$. In view of
\begin{equation*}
    N_{j-1}\tilde{v}(N_{j})
    =
    M^{(j-1)^{2}}\frac{(j^{2}\log M)^{n}}{M^{j^{2}}}
    =
    j^{2n}(\log M)^{n} M^{-(2j-1)} 
    \to 0
    \quad\text{ as } j\to \infty,
\end{equation*}
we may define $v=C\tilde{v}$ for some positive constant $C=C(\omega)>0$ in such a way that $\omega$ is a $(\cN, v)$-d.s.s for $\cN=(N_j)_{j\geq 1}$.
Finally, the functions 
\begin{equation} \label{ubiquitous system rho}
    \rho_{i}(N)=\left(\frac{(\log N)^{n}}{N}\right)^{\tau_{i}} \quad (1\leq i \leq n)
\end{equation}
are non-increasing and tend to $0$. The theorem now follows from Proposition~\ref{ubiquity statement} and Theorem~\ref{KW ambient measure}. 

\subsection{Proof of Theorems \ref{Jarnik Besicovitch uniform} and \ref{Jarnik Besicovitch uniform2} }
For the lower bound of the Hausdorff dimension and the divergent counterpart of the Hausdorff measure theory, we have the following theorem. It is a particular case of a result by Wang and Wu \cite[Theorem 3.1-3.2]{WW19} on weighted ubiquitous systems.

\begin{theorem}[{\cite{WW19}}] \label{MTPRR}
Let $\omega$ be a sequence in $[0,1]^n$ which is locally ubiquitous with respect to $(l_k)_{k\geq 1}$, $(u_k)_{k\geq 1}$, and $\rho=(\rho^{a_1},\ldots, \rho^{a_n})$ for some function $\rho:\R_{+} \to \R_{+}$ and $(a_{1},\dots, a_{n}) \in \R^{n}_{+}$.
Then, if $\Psi=(\rho^{a_{1}+t_{1}},\dots, \rho^{a_{n}+t_{n}})$ for some $\textbf{t}=(t_{1}, \dots, t_{n}) \in \R^{n}_{+}$,
\begin{equation*}
\dimh W(\Psi) 
\geq 
\min_{A_{i} \in A} 
\left\{ 
\#\cK_{1}
+ 
\#\cK_{2}
+
\frac{\sum\limits_{j \in \cK_{3}}a_{j} -\sum\limits_{j \in \cK_{2}}t_{j}}{A_{i}} 
\right\}=s,
\end{equation*}
where $A=\{ a_{i}, a_{i}+t_{i} , 1 \leq i \leq n \}$ and $\cK_{1},\cK_{2},\cK_{3}$ are a partition of $\{1, \dots, n\}$ defined as
\begin{equation*}
 \cK_{1}=\{ j:a_{j} \geq A_{i}\}, \quad \cK_{2}=\{j: a_{j}+t_{j} \leq A_{i} \} \backslash \cK_{1}, \quad \cK_{3}=\{1, \dots n\} \backslash (\cK_{1} \cup \cK_{2}).
 \end{equation*}
 Furthermore, for any ball $B \subset [0,1]^n$ we have
  \begin{equation} \label{MTPRR_measure}
\cH^{s}(B \cap W_{\omega}(\Psi))=\cH^{s}(B).
\end{equation}
 \end{theorem}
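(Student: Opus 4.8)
The plan is to deduce this statement as a specialization of the weighted ubiquity theorem of Wang and Wu \cite[Theorems 3.1--3.2]{WW19}, by matching our notation to theirs through the parameter dictionary recorded in the footnote of Section~\ref{ubiquity}. Concretely, I would realise $W_\omega(\Psi)$ as a limsup set of rectangles in their framework: the index set is $\N$ with weights $\beta_j=j$, each resonant set is the singleton $\{\boldsymbol\omega_j\}$ so the local scaling exponents vanish ($\kappa_i=0$), and each coordinate factor carries Lebesgue measure, which is Ahlfors $1$-regular ($\delta_i=1$). Under these identifications the hypothesis ``$\omega$ is locally ubiquitous with respect to $(l_k)$, $(u_k)$, and $\rho=(\rho^{a_1},\ldots,\rho^{a_n})$'' is precisely their ubiquity condition, and the target tuple $\Psi=(\rho^{a_1+t_1},\ldots,\rho^{a_n+t_n})$ is precisely their approximating family. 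It then remains to check that, with $\delta_i=1$ and $\kappa_i=0$, their general dimension lower bound collapses to the displayed expression and their full-measure conclusion collapses to \eqref{MTPRR_measure}; this is a mechanical substitution rather than a genuine difficulty.

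If instead one wished to argue from scratch, the standard route is a Cantor-set construction feeding the mass distribution principle. First I would fix an arbitrary ball $B$ and use local ubiquity to build a nested sequence of local families: at level $k$ the ubiquity hypothesis supplies resonant points $\boldsymbol\omega_j$ with $j\in J_k$ whose thickenings $\Delta(\boldsymbol\omega_j,\rho(u_k))$ cover a fixed proportion $c$ of the surviving region, and inside each such thickening one inscribes the smaller target rectangle $\Delta(\boldsymbol\omega_j,\Psi(u_k))$; these inscribed rectangles become the level-$k$ survivors. Iterating produces a Cantor set $\cK_\infty\subseteq B\cap W_\omega(\Psi)$, on which I would place a probability measure $\mu$ by distributing mass across the survivors at each level. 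The lower bound for $\dimh$ and the full $\cH^s$-measure statement then both follow once one shows $\mu(B(x,r))\ll r^{s}$ for all small $r$.

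The main obstacle, and the origin of the minimum over $A_i\in A$, is the anisotropy of the construction: the level rectangles have sidelengths $\rho^{a_i+t_i}$ of genuinely different orders, so estimating $\mu(B(x,r))$ forces a comparison of the isotropic ball radius $r\asymp\rho^{A_i}$ against each sidelength separately. As $r$ sweeps through the scales in $A=\{a_i,a_i+t_i\}$, the coordinate directions split into three regimes according to how $r$ compares with $\rho^{a_j}$ and $\rho^{a_j+t_j}$: directions whose ubiquity sidelength is already finer than $r$ (the set $\cK_1$, each contributing a full unit of dimension), directions whose target sidelength is still coarser than $r$ (the set $\cK_2$, each contributing $1-t_j/A_i$), and the transitional directions (the set $\cK_3$, each contributing $a_j/A_i$). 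The mass distribution principle then pins $\dimh$ below by the smallest such exponent, which is exactly the stated minimum. Verifying the bound $\mu(B(x,r))\ll r^{s}$ uniformly across all of these regimes, and in particular at the transitional radii interpolating between consecutive construction levels, is the delicate part of the calculation; it is precisely the work carried out in \cite{WW19}, which is why invoking their theorem is the efficient path.
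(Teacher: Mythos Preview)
Your proposal is correct and matches the paper's approach exactly: the paper does not prove Theorem~\ref{MTPRR} but simply records it as a particular case of \cite[Theorems~3.1--3.2]{WW19}, obtained via the parameter identifications $J=\N$, $\beta_j=j$, $\mathfrak{R}_j=\{\boldsymbol\omega_j\}$, $\kappa_i=0$, $\delta_i=1$ spelled out in the footnote of Section~\ref{ubiquity}. Your additional sketch of the Cantor-set construction is a faithful outline of the argument in \cite{WW19} itself, but it goes beyond what the paper does here.
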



Now let us first prove the upper bounds in Theorems \ref{Jarnik Besicovitch uniform} and \ref{Jarnik Besicovitch uniform2}.
\subsubsection{Upper bound}
For any $K\in\N$, consider the cover
\begin{equation*}
    \bigcup_{j>K}\Delta(\omega_{j},\Psi(j)) \supset W_{\omega}(\Psi)\, .
\end{equation*}
Fix $1\leq k \leq n$. Each rectangle $\Delta(\omega_{j},\Psi(j))$ can be covered by
\begin{equation*}
    \asymp\prod_{i=1}^{n}\max\left\{1, j^{\tau_{k}-\tau_{i}}\right\}=j^{\sum_{i:\tau_{k}>\tau_{i}}(\tau_{k}-\tau_{i})}
\end{equation*}
balls of radius $j^{-\tau_{k}}$. So
\begin{align*}
    \cH^{s}\left(W_{\omega}(\Psi)\right) & \ll \sum_{j>K} j^{\sum_{i:\tau_{k}>\tau_{i}}(\tau_{k}-\tau_{i})} (j^{-\tau_{k}})^{s} \\
    & \leq \sum_{j> K} j^{-s\tau_{k}+\sum_{i:\tau_{k}>\tau_{i}}(\tau_{k}-\tau_{i})} \to 0
\end{align*}
as $K\to \infty$ for any
\begin{equation*}
    s> \frac{1+\sum_{i:\tau_{k}>\tau_{i}}(\tau_{k}-\tau_{i})}{\tau_{k}}\, .
\end{equation*}
Hence
\begin{equation*}
    \dimh W_{\omega}(\Psi) \leq \frac{1+\sum_{i:\tau_{k}>\tau_{i}}(\tau_{k}-\tau_{i})}{\tau_{k}}
\end{equation*}
Since the above argument remains true for each $1\leq k \leq n$, we have the required upper bound. This gives the required upper bound for both Theorem \ref{Jarnik Besicovitch uniform} and \ref{Jarnik Besicovitch uniform2}.

\subsubsection{Lower bound}
        As stated above, for any low discrepancy sequence we can associate the above pair $(\cN,v)$ to ensure that $\omega$ is a $(\cN,v)$-d.s.s and so, by Proposition~\ref{ubiquity statement} $\omega$ is locally ubiquitous with respect to  $\cN$ and $\rho$ as in \eqref{ubiquitous system rho}.
Hence, Theorem \ref{MTPRR} is applicable in the lower bound of Theorem~\ref{Jarnik Besicovitch uniform}. Note that strictly speaking the functions $\Psi$ in Theorem~\ref{Jarnik Besicovitch uniform} should be of the form

    \begin{equation*}
        \Psi(N)=\left(\left(\frac{(\log N)^{n}}{N}\right)^{\tau_{1}},\dots, \left(\frac{(\log N)^{n}}{N}\right)^{\tau_{n}} \right)\, .
    \end{equation*}
    However, it is easily seen that for any $\theta>0$ and any $0<\varepsilon< \theta$ we have  
    \begin{equation*}
        \left(\frac{1}{N}\right)^{\theta}<\left(\frac{(\log N)^{n}}{N}\right)^{\theta}<\left(\frac{1}{N}\right)^{\theta-\varepsilon}
    \end{equation*}
    for $N$ sufficiently large, and so the Hausdorff dimension bound is the same. \par 
    From the conditions of Theorem~\ref{Jarnik Besicovitch uniform2}, we can choose a sequence of natural numbers over which $D_{N}(\omega)\ll N^{-1}$. Take a suitably sparse subsequence of these integers to be $\cN$ such that $N_{k-1}N_{k}^{-1}\to 0$ as $k \to \infty$. Paired with the function $v(N)=cN^{-1}$, where $c$ is the implied constant in the condition of Theorem \ref{Jarnik Besicovitch uniform2}, we have that $\omega$ is a $(\cN,v)$-d.s.s for such pair. 
Hence, $\omega$ is locally ubiquitous with respect to $\cN$ and $\rho=(\rho_1,\ldots, \rho_n)$ determined by
    \begin{equation*}
        \rho_{i}(N)=\rho(N)^{a_{i}}=N^{-a_{i}} \quad (1\leq i \leq n)\, ,
    \end{equation*}
    for vector $\boldsymbol{a}=(a_{1},\dots, a_{n})\in\R_{+}$ such that
    \begin{equation*}
        \sum_{i=1}^{n}a_{i}=1\, .
    \end{equation*}
    \par
    We now consider both lower bound results in tandem. It remains to show the formula given in Theorem \ref{MTPRR} produces the lower bound of Theorem~\ref{Jarnik Besicovitch uniform} and \ref{Jarnik Besicovitch uniform2}.
In order to apply Theorem \ref{MTPRR}, for any $\mathbf{a}=(a_{1},\dots, a_{n})\in \R_{+}$ such that $\sum_{i=1}^{n}a_i=1$ and $\boldsymbol{\tau}=(\tau_{1},\dots,\tau_{n})$ as in Theorem~\ref{Jarnik Besicovitch uniform} and \ref{Jarnik Besicovitch uniform2}, let us define
\begin{equation*}
\textbf{t}=(t_{1},\dots, t_{n})=(\tau_{1}-a_{1}, \dots, \tau_{n}-a_{n})\, .
\end{equation*}
Consider the two cases:
\begin{enumerate}
    \item[i.] Suppose that $\tau_{i}\geq \frac{1}{n}$ for all $1\leq i \leq n$. Then set
    \begin{equation*}
        a_{i}=\frac{1}{n} \quad (1\leq i \leq n)\, .
    \end{equation*}
    Now, if $A=a_{i}$ for any $1\leq i \leq n$ we have that $\cK_{1}=\{1,\dots, n\}$ and so the lower bound formula is $n$ in this case. If $A=\tau_{j}$ for some $1\leq j \leq n$ then
    \begin{equation*}
        \cK_{1}=\emptyset \, , \quad \cK_{2}=\{ i : \tau_{j} \geq \tau_{i}\}\, , \quad \cK_{3}=\cK_{2}^{c}\, ,
    \end{equation*}
    and so
    \begin{align*}
        \dimh W_{\omega}(\Psi)&\geq \min_{1\leq j \leq n}\left\{ \# \cK_{2}+\frac{\sum_{i \in \cK_{2}^{c}} a_{i} -\sum_{i\in \cK_{2}} t_{k}}{\tau_{j}} \right\} \\
        &\geq \min_{1\leq j \leq n}\left\{ \# \cK_{2}+\frac{\sum_{i =1}^{n} a_{i} -\sum_{i\in \cK_{2}} (a_{i}+t_{i})}{\tau_{j}} \right\} \\
        &\geq \min_{1\leq j \leq n}\left\{ \frac{\sum_{i =1}^{n} a_{i} +\sum_{i\in \cK_{2}} (\tau_{j}-(a_{i}+t_{i})}{\tau_{j}} \right\} \\
         &\geq \min_{1\leq j \leq n}\left\{ \frac{1 +\sum_{i:\tau_{j}\geq \tau_{i}} (\tau_{j}-\tau_{i})}{\tau_{j}} \right\}. \\
    \end{align*}

    \item[ii.] Assume that there exists $\tau_{j}$ such that $\tau_{j}<\frac{1}{n}$. Without loss of generality, suppose that 
    \begin{equation*}
        \tau_{1}>\tau_{2}>\dots>\tau_{n}\, .
    \end{equation*}
    We want to choose $1\leq u \leq n$ that solves
    \begin{equation*}
        u \times \widetilde{D} + \sum\limits_{u < i \leq n}\tau_{i}=1
    \end{equation*}
    for some $\widetilde{D}>0$ with $\tau_{u}>\widetilde{D}$. Pick
    \begin{equation*}
        \widetilde{D}=\frac{1-\sum_{u\leq i \leq n}\tau_{i}}{u}
    \end{equation*}
    and note that
    \begin{equation*}
        \tau_{1}>\tau_{2}>\dots>\tau_{u}>\widetilde{D}\geq \tau_{u+1}>\dots > \tau_{n}.
    \end{equation*}
    Hence pick
    \begin{equation*}
        a_{i}=\begin{cases}
            \widetilde{D} \quad (1\leq i \leq u) , \\
            \tau_{i} \quad (u+1\leq i \leq n) .
        \end{cases}
    \end{equation*}
    For $A=a_{i}$ with $1\leq i \leq u$ we have
    \begin{equation*}
        \cK_{1}=\{ 1, \dots , u \}, \quad \cK_{2}=\{u+1, \dots , n\}, \quad \cK_{3}=\emptyset,
    \end{equation*}
    and for $A=a_{i}=\tau_{i}$ with $u+1\leq i \leq n$
    \begin{equation*}
        \cK_{1}=\{1, \dots , i \}, \quad \cK_{2}= \{i+1, \dots , n\}, \quad \cK_{3}=\emptyset.
    \end{equation*}
    In both cases, we have a lower bound formula of $n$. \par 
    For each $A=a_{j}+t_{j}=\tau_{j}$ with $1\leq j \leq u$ we have that
    \begin{equation*}
        \cK_{1}=\emptyset , \quad \cK_{2}=\{ j, \dots , n\}=\{i: \tau_{j}\geq \tau_{i}\}, \quad \cK_{3}=\cK_{2}^{c},
    \end{equation*}
   
\end{enumerate}
 Notice these are the same sets as in the conclusion of case $i)$, and so we obtain the same lower bound, thus the proof of Theorem~\ref{Jarnik Besicovitch uniform} and \ref{Jarnik Besicovitch uniform2} is complete.

\medskip

\noindent{\bf Acknowledgments.} This research is supported by the Australian Research Council Discovery Project 200100994. During the latter stages of the paper, Benjamin Ward was awarded a Leverhulme Early Career fellowship.

\end{document}